\documentclass[a4paper, reqno]{amsart}
\usepackage{amssymb, amsmath, amsthm, enumerate, mathrsfs, mathscinet, mathtools, pgfplots, standalone, tikz}

\numberwithin{equation}{section}
\newtheorem{theorem}{Theorem}[section]
\newtheorem{corollary}[theorem]{Corollary}
\newtheorem{lemma}[theorem]{Lemma}
\newtheorem{proposition}[theorem]{Proposition}

\theoremstyle{definition}
\newtheorem{definition}[theorem]{Definition}
\newtheorem{remark}[theorem]{Remark}

\newtheorem*{acknowledgment}{Acknowledgment}

\newcommand{\C}{\mathbb{C}}

\newcommand{\Z}{\mathbb{Z}}
\newcommand{\N}{\mathbb{N}}

\newcommand{\ud}{\mathrm{d}}

\pgfplotsset{compat=1.6}

\pgfplotsset{soldot/.style={color=blue,only marks,mark=*}} \pgfplotsset{rsoldot/.style={color=red,only marks,mark=*}} 

\begin{document}

\title[Factorisations of monomials]{Counting factorisations of monomials over rings of integers modulo $N$}

\author[J. Hickman]{ Jonathan Hickman }
\address{Jonathan Hickman: Eckhart Hall Room 414, Department of Mathematics, University of Chicago, 5734 S. University Avenue, Chicago, Illinois,  60637, US.}
\email{jehickman@uchicago.edu}

\author[J. Wright]{ James Wright }
\address{James Wright: Room 4621, James Clerk Maxwell Building, The King's Buildings, Peter Guthrie Tait Road, Edinburgh, EH9 3FD, UK.}
\email{j.r.wright@ed.ac.uk}

\begin{abstract} A sharp bound is obtained for the number of ways to express the monomial $X^n$ as a product of linear factors over $\Z/p^{\alpha}\Z$. The proof relies on an induction-on-scale procedure which is used to estimate the number of solutions to a certain system of polynomial congruences. The method also applies to more general systems of polynomial congruences that satisfy a \emph{non-degeneracy} hypothesis. 
\end{abstract}

\maketitle

%%%%%%%%%%%%%%%%%%%%%%%%%%%%%%%%%%%%%%%%%%%%%%%%%%%%%%%%%%%%%%%%%%%%%%%%%%%%%%%%%%%%%%%%%%%%%%%%

%                                          INTRODUCTION

%%%%%%%%%%%%%%%%%%%%%%%%%%%%%%%%%%%%%%%%%%%%%%%%%%%%%%%%%%%%%%%%%%%%%%%%%%%%%%%%%%%%%%%%%%%%%%%%

\section{Introduction}

Let $\alpha, n \in \N$ and $p \in \N$ be prime. One purpose of this note is to provide a precise count for the number of factorisations of the monomial $X^n$ into linear factors over $\Z/p^{\alpha}\Z$. That is, (after normalising) one wishes to determine the value of
\begin{equation*}
\mathbf{N}(\vec{0}_n; p^{\alpha}) := p^{-\alpha n} \#\Big\{(x_1, \dots, x_n) \in [\Z/p^{\alpha}\Z]^n : X^n \equiv \prod_{j=1}^n (X-x_j) \bmod p^{\alpha} \Big\}.
\end{equation*}
If $\alpha = 1$, then $\mathbb{F}_p := \Z/p\Z$ is a field and the polynomial ring $\mathbb{F}_p[X]$ is a unique factorisation domain and so there is only one possible factorisation of $X^n$. In general, however, there are many different factorisations: for example, 
\begin{equation*}
X^2 \equiv (X-3)(X-6) \bmod 9. 
\end{equation*}

Unfortunately, the general statement of the results is slightly involved. Things become much cleaner, however, if $n$ is assumed to be a triangular number, and it is instructive to first consider this case. In particular, letting $\triangle_{r} := \frac{r(r+1)}{2}$ denote the $r$th triangular number, the following estimate holds. 

\begin{proposition}\label{example proposition} If $n = \triangle_r$ for some $r \in \N$ with $r \geq 2$ and $p$ is a sufficiently large prime, then\footnote{Given a (possibly empty) list of objects $L$, for real numbers $A, B \geq 0$ the notation $A \ll_L B$ or $B \gg_L A$ signifies that $A \leq C_LB$ for some constant $C_L\geq 0$ depending only on the objects in the list. Furthermore, $A \sim_L B$ signifies that $A \ll_L B \ll_L A$.}   
\begin{equation*}
\mathbf{N}(\vec{0}_n; p^{\alpha}) \ll_n  \alpha p^{-\alpha r}
\end{equation*}
holds for all $\alpha \in \N$. 
The result is sharp for all $n \neq 3$ in the sense that the reverse inequality also holds for infinitely many $\alpha$. 
%also 
%{\color{red} holds when $n\neq 3$. In this case, 
%\begin{equation*}
%\mathbf{N}(\vec{0}_n; p^{\alpha}) \sim_n  \alpha p^{-\alpha r}
%\end{equation*}
%holds for all $\alpha \in \N$. When $n=3$ the reverse inequality still holds but only
%for infinitely many $\alpha$.} 
\end{proposition}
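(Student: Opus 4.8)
The plan is to recast $\mathbf{N}(\vec{0}_n; p^{\alpha})$ as a count of solutions to a system of polynomial congruences and then treat the upper and lower bounds separately; the hypothesis $n \neq 3$ enters only in the lower bound.

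Expanding $\prod_{j=1}^n(X-x_j) = \sum_{k=0}^n (-1)^k e_k(x_1,\dots,x_n)X^{n-k}$, where $e_k$ denotes the $k$th elementary symmetric polynomial, the congruence $X^n \equiv \prod_j(X-x_j) \bmod p^{\alpha}$ is equivalent to the system $e_k(\vec{x}) \equiv 0 \bmod p^{\alpha}$ for $1 \leq k \leq n$, so that $\mathbf{N}(\vec{0}_n; p^{\alpha}) = p^{-\alpha n}\#\{\vec{x} \in [\Z/p^{\alpha}\Z]^n : e_k(\vec{x}) \equiv 0 \bmod p^{\alpha},\ 1 \leq k \leq n\}$. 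Since $\mathbb{F}_p[X]$ is a unique factorisation domain, reducing modulo $p$ forces $x_j \equiv 0 \bmod p$ for all $j$; inspecting the Newton polygon of $\prod_j(X-x_j)$, whose smallest slope is at least $\alpha/n$ once all $e_k$ are divisible by $p^{\alpha}$, upgrades this to $x_j \equiv 0 \bmod p^{\lceil \alpha/n\rceil}$ for every solution. This divisibility serves as the base case of an induction on scales.

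\textbf{Upper bound.} I would carry out the induction on scales through the larger family of weighted systems $\#\{\vec{z} \bmod p^{\max_k \beta_k} : e_k(\vec{z}) \equiv 0 \bmod p^{\beta_k},\ 1 \leq k \leq n\}$ for $\vec{\beta} \in \Z_{\geq 0}^n$. As long as all $\beta_k \geq 1$, substituting $z_j = p w_j$ transforms the system into the weighted system with exponents $(\beta_k - k)_+$; iterating this while the entries stay positive takes $(\alpha,\dots,\alpha)$ to a system in which the high-index congruences have become vacuous, i.e.\ a ``detuned'' system carrying strictly fewer effective conditions. Repeating the procedure removes successive blocks of conditions, and because $n = \triangle_r$ the bookkeeping of the weights is exactly balanced, so that unwinding the recursion yields the bound $\#\{\dots\} \ll_n \alpha\, p^{\alpha(n-r)} = \alpha\, p^{\alpha \triangle_{r-1}}$ (note $\triangle_{r-1} = n-r$). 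The main obstacle, and the reason a non-degeneracy hypothesis is needed, is to rule out anomalous cancellation among the monomials of the $e_k$ when one reduces to the residue field at each scale: one must check that the relevant truncations of the $e_k$ retain their expected leading behaviour, which is where the largeness of $p$ is used, and this has to be threaded through the weight bookkeeping.

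\textbf{Lower bound and the exceptional case.} For the matching lower bound I would run the scale analysis in the constructive direction: instead of merely bounding, retain at each stage a full sub-family of admissible configurations. This terminates with $n - r = \triangle_{r-1}$ of the variables essentially free, contributing a factor $\sim p^{\alpha(n-r)}$, together with one residual ``scale'' parameter that can be chosen in $\gg \alpha$ ways; for infinitely many $\alpha$ this shows $\mathbf{N}(\vec{0}_n; p^{\alpha}) \gg_n \alpha\, p^{-\alpha r}$. For $n = 3$ (so $r = 2$) this breaks down: eliminating one variable via $e_1$ leaves only the conic $e_2$, which after completing the square has the shape $u^2 + 3z^2 \equiv 0 \bmod p^{\alpha}$, and the residual scale parameter is coupled to it; this conic has nontrivial solutions precisely when $-3$ is a square modulo $p$, i.e.\ only for $p \equiv 1 \bmod 3$. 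For the infinitely many primes $p \equiv 2 \bmod 3$ the channel carrying the extra factor of $\alpha$ is empty, and a direct computation gives $\mathbf{N}(\vec{0}_3; p^{\alpha}) \sim p^{-2\alpha}$, smaller by a factor $\sim \alpha$ than the bound. Conceptually: for $r \geq 3$ the analogous ``channel variety'' is positive-dimensional and has a geometrically irreducible component defined over $\mathbb{F}_p$, hence carries $\gg p^{\dim}$ points for all large $p$ by Lang--Weil, while at $r = 2$ it collapses to a pair of lines that are Galois-conjugate over $\mathbb{F}_p$ for half of all primes --- which is exactly why sharpness fails only at $n = 3$.
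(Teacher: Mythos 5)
Your reduction to the system $e_k(\vec{x})\equiv 0 \bmod p^{\alpha}$, the Newton-polygon observation that every solution satisfies $x_j \equiv 0 \bmod p^{\lceil \alpha/n\rceil}$, and the scale-by-scale "detuning" are all in the spirit of the paper's argument (which works with the power sums $P_k$ rather than the $e_k$, via Newton--Girard, and decomposes $\Z_p^n$ into annuli $|\vec{x}|=p^{-l}$). The $n=3$ discussion via Gauss sums and the quadratic character of $-3$ also matches the paper. But there is a genuine gap at the heart of the upper bound. The substitution $z_j=pw_j$ only accounts for the solutions with \emph{all} coordinates divisible by $p$, and that is the only stratum for which your recursion is valid. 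It is true that the full system (all $n$ conditions active) forces total divisibility, but as soon as the top conditions become vacuous the reduced system $e_1\equiv\dots\equiv e_{n-1}\equiv 0$ (say) has plenty of solutions with a unit coordinate — these are factorisations of $X^n - c$ for $c\not\equiv 0$ — so "iterating this while the entries stay positive" is not possible: at each scale $l$ you must separately count the solutions $\vec{u}$ with $|\vec{u}|=1$ of the residual system $e_k(\vec{u})\equiv 0\bmod p^{\alpha-kl}$. This count is the real content of the proof. The paper handles it by a second induction (Lemma \ref{main estimate lemma}): a Taylor expansion $f_k(\vec{u}+p^{l_1-1}\vec{z})\equiv f_k(\vec{u})+p^{l_1-1}\nabla f_k(\vec{u})\cdot\vec{z}$ reduces each step to intersecting $p^{-1}$-neighbourhoods of hyperplanes, whose transversality is exactly the non-degeneracy of the system — itself a nontrivial fact for power sums, proved via a weighted Newton--Girard identity. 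Your remark about "ruling out anomalous cancellation among the monomials of the $e_k$" gestures at this but supplies no mechanism.

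Two further inputs are asserted rather than proved. For the upper bound you need the mod-$p$ count $\#\{\vec{u}\in\mathbb{F}_p^n: P_1(\vec{u})=\dots=P_r(\vec{u})=0\}\ll_n p^{n-r}$ (the paper gets this either from a Schwarz--Zippel bound after checking $\dim V_r=n-1-r$, or elementarily from the $n!$ bound on roots of a univariate polynomial); without it the per-annulus contributions cannot be summed to $\alpha\,p^{-\alpha r}$, and it is precisely the case $\sigma_r=n$ (i.e.\ $n=\triangle_r$) in which $\sim\alpha$ annuli contribute equally, producing the factor $\alpha$. For sharpness you invoke Lang--Weil on the "channel variety", but Lang--Weil requires \emph{absolute irreducibility} of $\{P_1=\dots=P_r=0\}\subseteq\mathbb{P}^{n-1}$, which is not automatic and occupies the paper's entire appendix (Serre's criterion for radicality, Hartshorne connectedness, and the Jacobian criterion); note also that for $n=\triangle_r\neq 3$ one indeed has $r\leq n-2$, which is what makes Lang--Weil applicable and is the structural reason $n=3$ is exceptional — consistent with your conic picture, but again not established by it.
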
 

When $n$ is not a triangular number, the asymptotics of $\mathbf{N}(\vec{0}_n; p^{\alpha})$ are not readily expressed in a single compact formula. In fact, for $n=2$ it is a simple matter to see that $\mathbf{N}(\vec{0}_2; p^{\alpha}) = p^{-3\alpha/2}$ when $\alpha$ is even and $\mathbf{N}(\vec{0}_2; p^{\alpha}) = p^{-(3\alpha + 1)/2}$ when $\alpha$ is odd.

In order to state the general form of Proposition \ref{example proposition}, first define
\begin{equation*}
e_n(\alpha, r) := r\alpha + (n - \triangle_{r}) \cdot  \left\{\begin{array}{ll}
\lceil \frac{\alpha}{r+1} \rceil &\textrm{if $\triangle_{r} \leq n$} \\[6pt]
\lceil \frac{\alpha}{r}\rceil  - 1  &\textrm{if $\triangle_{r} \geq n$}
\end{array}\right. 
\end{equation*}
for $0 \leq r \leq n-1$. Let $e_n(\alpha) := \min \{ e_n(\alpha, r) : r \in R_n(\alpha)\}$ where
\begin{equation*}
R_n(\alpha) := \{0\} \cup \big\{ 1 \leq r \leq n-1 : \lceil \tfrac{\alpha}{r+1}\rceil < \lceil \tfrac{\alpha}{r}\rceil \big\}.
\end{equation*}
and let
\begin{equation*}
[\alpha]^{\delta_n(\triangle)} := \left\{ \begin{array}{ll}
\alpha & \textrm{if $n = \triangle_r$ for some $r \geq 1$ and $r \in R_n(\alpha)$} \\
1 & \textrm{otherwise}
\end{array}\right. . 
\end{equation*}
The main theorem is as follows.  

\begin{theorem}\label{main theorem} If $n \in \N$, $n \neq 3$ and $p$ is a sufficiently large prime, then
\begin{equation}\label{main theorem estimate}
\mathbf{N}(\vec{0}_n; p^{\alpha}) \sim_{n}  [\alpha]^{\delta_n(\triangle)} p^{-e_n(\alpha)}
\end{equation}
holds for all $\alpha \in \N$. 
%As noted in Proposition \ref{example proposition},
%when $n=3$, the upper bound in 
%\eqref{main theorem estimate} holds for all $\alpha$ but the lower bound only holds for
%infinitely many values of $\alpha$.

\end{theorem}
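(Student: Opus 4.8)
The plan is to reformulate the count via elementary symmetric functions and then run an induction on scale. Expanding, $\prod_{j=1}^{n}(X-x_j)=\sum_{k=0}^{n}(-1)^{k}e_k(x)X^{n-k}$, where $e_k$ denotes the $k$th elementary symmetric polynomial, so that $X^n\equiv\prod_j(X-x_j)\bmod p^{\alpha}$ is equivalent to the system of congruences $e_k(x)\equiv0\bmod p^{\alpha}$ for $1\le k\le n$, and $\mathbf{N}(\vec 0_n;p^{\alpha})=p^{-n\alpha}\#\{x\in(\Z/p^{\alpha}\Z)^n:e_k(x)\equiv0\bmod p^{\alpha},\ 1\le k\le n\}$. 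More general \emph{mixed-modulus} systems $e_k(x)\equiv0\bmod p^{a_k}$ with $a_1\ge\cdots\ge a_n\ge0$ of the shape $a_k=\max(\alpha-k\ell,0)$ will arise, and I would count the solutions of all of these by induction on the scale $\alpha$.

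The basic step reduces such a system modulo $p$. If $K=\max\{k:a_k\ge1\}$ then $\bar e_1(x)=\cdots=\bar e_K(x)=0$ in $\mathbb{F}_p$, which forces $\prod_j(X-\bar x_j)=X^m\bar g(X)$ in $\mathbb{F}_p[X]$ with $\bar g(0)\ne0$ and with the vanishing of $\bar e_1,\dots,\bar e_K$ inherited by the roots of $\bar g$; in particular either $m=n$ (all roots $\equiv0\bmod p$) or $n-m\ge K+1$. Hensel's lemma lifts this to a factorisation $\prod_j(X-x_j)=f(X)g(X)$ over $\Z/p^{\alpha}\Z$, with $f$ of degree $m$ carrying the roots $\equiv0\bmod p$ and $g$ of degree $n-m$ the unit roots. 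Rescaling the roots of $f$ by $p$ turns the congruences they satisfy into a mixed-modulus system at scale $\alpha-1$ with moduli shifted down, while the unit roots range over the $\mathbb{F}_p$-points (refined by higher $p$-adic conditions) of the variety $V_{d,K}:=\{u\in(\mathbb{F}_p^{\times})^{d}:e_1(u)=\cdots=e_K(u)=0\}$, $d=n-m$. Tracking how the $p$-adic corrections in the unit part couple back to the small part yields a recursion expressing the scale-$\alpha$ count as a sum, over these splittings, of (a point count on $V_{d,K}$) times (a scale-$(\alpha-1)$ count).

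Unwound, the recursion is a sum over \emph{profiles} recording, at each level of zooming, how many roots are released as units. The profile that first zooms uniformly by $p^{\ell}$ and then releases all $n$ remaining roots contributes to $\#\{x:e_k(x)\equiv0\bmod p^{\alpha}\}$ — provided $V_{n,K}$ has the expected point count $\sim_n p^{n-K}$ — an amount $\sim_n p^{(\alpha-\ell)n-\sum_k a_k}=p^{n\alpha-r\alpha+\ell(\triangle_r-n)}$ with $r=K=\lfloor(\alpha-1)/\ell\rfloor$. The integers $\ell$ with $\lfloor(\alpha-1)/\ell\rfloor=r$ form an interval whose endpoints are $\lceil\alpha/(r+1)\rceil$ and $\lceil\alpha/r\rceil-1$, nonempty exactly when $r\in R_n(\alpha)$, and minimising the corresponding exponent $r\alpha-\ell(\triangle_r-n)$ over it (the relevant endpoint being dictated by the sign of $\triangle_r-n$) reproduces precisely $e_n(\alpha,r)$; after checking that "mixed" profiles ($m>0$ at some level) never do better, this yields $e_n(\alpha)=\min_{r\in R_n(\alpha)}e_n(\alpha,r)$. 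When $n=\triangle_r$ the exponent $r\alpha-\ell(\triangle_r-n)=r\alpha$ does not depend on $\ell$, so the $\asymp\alpha$ admissible levels contribute equally and their disjoint solution sets add — this is the source of the extra factor $[\alpha]^{\delta_n(\triangle)}$. Dividing by $p^{n\alpha}$ gives \eqref{main theorem estimate}, and the matching lower bound follows by writing down explicit families of $x$ lying in the dominant profile.

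The crucial input is the uniform-in-$p$ estimate $\#V_{d,K}\sim_n p^{d-K}$, i.e.\ that $V_{d,K}$, cut out by the $K$ equations $e_1=\cdots=e_K=0$, has the expected dimension $d-K$ and a Zariski-dense set of $\mathbb{F}_p$-points; this is the non-degeneracy hypothesis. It is routine when $d\ge K+2$, since a fixed positive proportion of the coefficient vectors $(e_{K+1},\dots,e_d)$ produce a polynomial that splits completely over $\mathbb{F}_p$. The delicate case is $d=K+1$, where $V_{d,K}$ forces $\prod_j(X-u_j)=X^{K+1}-a$ to be a binomial, whose splitting type over $\mathbb{F}_p$ fluctuates with $\gcd(K+1,p-1)$; for $n\ne3$ one must verify that such "binomial" profiles always carry a strictly larger exponent than the optimal one, so that they do not affect the leading asymptotics. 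For $n=3$ this genuinely fails: eliminating $x_3$ via $e_1$ turns the system into $x_1^{2}+x_1x_2+x_2^{2}\equiv0\bmod p^{\alpha}$, whose modulo-$p$ solution count switches according to whether $-3$ is a square mod $p$ (equivalently, whether the ternary factor $X^3-a$ splits over $\mathbb{F}_p$), and here the optimal profile \emph{is} the offending binomial one — which is exactly why $n=3$ is excluded and why enlarging $p$ does not help. I expect the main obstacles to be precisely: (i) proving $\#V_{d,K}\sim_n p^{d-K}$ uniformly in $p$ and ruling out the binomial strata for $n\ne3$; and (ii) the combinatorial reconciliation of the unwound recursion with the exact expressions $e_n(\alpha,r)$, $R_n(\alpha)$ and $[\alpha]^{\delta_n(\triangle)}$, including the argument that "mixed" profiles are never dominant.
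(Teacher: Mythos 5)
Your overall architecture is the paper's: reformulate the factorisation count as a system of symmetric-function congruences in the roots (the paper uses the power sums $P_k$ rather than $e_k$, but for $p>n$ Newton--Girard makes these interchangeable), foliate by the common $p$-adic valuation $\ell$ of the roots, observe that the scale-$\ell$ stratum contributes exponent $r\alpha+\ell(n-\triangle_r)$ with $r=\lfloor(\alpha-1)/\ell\rfloor$, and minimise over the interval $\lceil\alpha/(r+1)\rceil\le\ell\le\lceil\alpha/r\rceil-1$ to recover $e_n(\alpha,r)$, $R_n(\alpha)$ and the logarithmic factor when $\triangle_r=n$. Your identification of the $d=K+1$ (i.e.\ $r=n-1$) stratum as the delicate one, and of the $n=3$ anomaly via the form $x_1^2+x_1x_2+x_2^2$ and the quadratic character of $-3$, matches $\S$5 of the paper exactly. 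However, the proposal has genuine gaps precisely where the paper does its real work.

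First, the inductive step. Your recursion hinges on ``tracking how the $p$-adic corrections in the unit part couple back to the small part,'' which is exactly the point that needs proof. In the paper this is Lemma \ref{main estimate lemma} together with Lemma \ref{intersecting planes lemma}: after zooming in, the top-level congruences linearise to $s$ hyperplane conditions on the correction variables, and one needs the gradients $\nabla f_1(\vec{u}),\dots,\nabla f_s(\vec{u})$ to be linearly independent mod $p$ at \emph{every} point $\vec{u}$ of the variety --- including points with repeated coordinates, where the Jacobian minors of the $e_k$ (or $P_k$) are Vandermonde-type products that vanish. Establishing this full-rank property is the content of $\S$\ref{application section} (the weighted Newton--Girard argument, Proposition \ref{weighted system proposition}); your sketch never confronts it, and a naive Hensel/implicit-function lifting at the unit roots fails on the repeated-root locus. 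Relatedly, your formulation requires showing separately that ``mixed'' profiles are subdominant; the paper avoids this entirely because Lemma \ref{main estimate lemma} is an exact identity that absorbs all such configurations into the single factor $\mathbf{\mathcal N}(f_1,\dots,f_r;p)-p^{-n}$. Second, the base case. The claim $\#V_{d,K}(\mathbb{F}_p)\sim_n p^{d-K}$ is not routine even for $d\ge K+2$: your ``positive proportion of coefficient vectors split completely'' argument presupposes equidistribution of the coefficient map restricted to the subspace $e_1=\dots=e_K=0$, which is essentially the statement being proved. The paper obtains it from the Lang--Weil theorem, whose applicability requires the absolute irreducibility of $V_{d,K}$ --- the subject of the entire appendix (Serre's criterion, Hartshorne connectedness, the Jacobian criterion). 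Finally, for the lower bound the binomial/$r=n-1$ stratum need not carry a \emph{strictly} larger exponent (e.g.\ $e_4(4,1)=e_4(4,3)=10$); what must be shown, as in $\S$\ref{lower bounds section}, is that some $r\le n-2$ in $R_n(\alpha)$ always attains the minimum, which for $n\ge5$ requires the non-trivial case analysis culminating in the inequality \eqref{comparing terms 2}.
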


Explicitly, the proof shows that the upper bound in \eqref{main theorem estimate} holds if $p > n$. Curiously, the $n=3$ case behaves differently and, in particular, the asymptotics for $\mathbf{N}(\vec{0}_3; p^{\alpha})$ depend on the congruence class of $p$ modulo 3: see Lemma \ref{cubic lemma}, below. 

The definition of the exponent $e_n(\alpha)$ is somewhat complicated and it is useful to consider some examples. For instance, when $n=2$ it follows that $e_2(\alpha,0) = 2\alpha$ and $e_2(\alpha,1) = \alpha + \lceil \frac{\alpha}{2} \rceil$ for all $\alpha \in \N$, whilst $R_2(1) = \{0\}$ and $R_2(\alpha) = \{0,1\}$ when $\alpha\geq 2$. Thus, one deduces that $e_2(\alpha) = e_2(\alpha,1)$ and $[\alpha]^{\delta_2(\triangle)} = 1$ for all $\alpha \in \N$. Therefore, $[\alpha]^{\delta_2(\triangle)} p^{-e_2(\alpha)} = p^{-3\alpha/2}$ when $\alpha$ is even and $[\alpha]^{\delta_2(\triangle)} p^{-e_2(\alpha)} = p^{-(3\alpha + 1)/2}$ when $\alpha$ is odd and so \eqref{main theorem estimate} yields
\begin{equation*}
\mathbf{N}(\vec{0}_2; p^{\alpha}) \ \sim \ p^{-3\alpha/2} \ \ \textrm{or} \ \ p^{-(3\alpha + 1)/2},
\end{equation*}
depending on whether $\alpha$ is even or odd, respectively. As noted earlier, these asymptotics are in fact an equality.

It is also instructive to understand how Theorem \ref{main theorem} relates to Proposition \ref{example proposition}. First note that, by a simple computation, 
\begin{align*}
e_n(\alpha, r-1) - e_n(\alpha, r) &\geq 0 \quad \textrm{for $\triangle_{r} \leq n$},\\
e_n(\alpha, r+1) - e_n(\alpha, r) &\geq 1 \quad \textrm{for $\triangle_r \geq n$}   
\end{align*} 
and, consequently, $e_n(\alpha) \geq \min \{ e_n(\alpha, r_n^-), e_n(\alpha, r_n^+)\}$ with equality if $r_n^-, r_n^+ \in R_n(\alpha)$ where 
\begin{equation*}
r_n^- := \max \{ r \geq 0 : \triangle_r \leq n\} \quad \textrm{and} \quad r_n^+ := \min \{ r \geq 0 : \triangle_r \geq n\}.
\end{equation*}
In general, either $e_n(\alpha, r_n^-)$ or $e_n(\alpha, r_n^+)$ can achieve the minimum: compare, for instance, the case $n=5$, $\alpha = 3k$ with $n=5$, $\alpha = 3k+1$ for any $k \in \N$. However, if $n = \triangle_r$ is triangular, then $r_n^- = r_n^+ = r$ and so  
\begin{equation}\label{triangular exponent}
e_n(\alpha) \geq e_n(\alpha, r) = r\alpha \qquad \textrm{with equality if $r \in R_n(\alpha)$} .
\end{equation}
Note that the right-hand exponent is precisely that appearing in Proposition \ref{example proposition}. It is easy to verify that 
\begin{align*}
\big\{ r \in \N : \lceil\tfrac{\alpha}{r+1}\rceil \not< \lceil \tfrac{\alpha}{r}\rceil \} &= \bigcup_{k\in \N} \big\{ r \in \N : \lceil\tfrac{\alpha}{r+1}\rceil = \lceil \tfrac{\alpha}{r}\rceil = k \big\} \\
&= \bigcup_{k\in \N} \big\{ r \in \N : \alpha \leq k r \leq \alpha + r - k \big\}
\end{align*} 
and so if $\alpha$ is sufficiently large (in particular, if $\alpha > (n-1)^2$), then $R_n(\alpha) = \{0,1,\dots, n-1\}$ and one has $e_n(\alpha) = \min \{ e_n(\alpha, r_n^-), e_n(\alpha, r_n^+)\}$ with $e_n(\alpha) = \alpha r$ if $n = \triangle_r$. On the other hand, if $\alpha = 1$, then $R_n(1) = \{0\}$ for all $n \in \N$ and so $e_n(1) = e_n(1, 0) = n$, which is consistent with the unique factorisation property.  

Observe that Theorem \ref{main theorem} can be recast as an estimation of the number of solutions to a certain system of congruence equations. In particular, by the classical Newton--Girard formul\ae\, (see, for instance, \cite[(2.11$^\prime$)]{Macdonald1995}), if $p > n$, then $\mathbf{N}(\vec{0}_n; p^{\alpha})$ is precisely the normalised number of solutions in $[\Z/p^{\alpha}\Z]^n$ to the system
\begin{equation}\label{original system}
P_k(X_1, \dots, X_n) \equiv 0 \mod p^{\alpha} \qquad \textrm{for $1 \leq k \leq n$} 
\end{equation}
where $P_k$ is the $k$th power sum, given by
\begin{equation*}
P_k(X_1, \dots, X_n) := X_1^k + \dots + X_{n}^k.
\end{equation*}

At this point some contextual remarks are in order.
 
\begin{remark}\label{main theorem remark}
\begin{enumerate}[1)]
\item\label{general polynomials remark} Rather than restrict to monomials, given an $n$-tuple
$\vec{y} = (y_1, \dots, y_n) \in [\Z/p^{\alpha}\Z]^n$, one could estimate the normalised count $\mathbf{N}(\vec{y};p^{\alpha})$ of the number of factorisations 
\begin{equation*}
\prod_{j=1}^n (X-y_j) \equiv \prod_{j=1}^n (X-x_j) \mod p^{\alpha}.
\end{equation*}
For $p > n$, this is equivalent to counting solutions to the system 
\begin{equation*}
P_k(X_1, \dots, X_n) \equiv P_k(y_1, \dots, y_n) \mod p^{\alpha} \qquad \textrm{for $1 \leq k \leq n$.} 
\end{equation*}
If the components of $\vec{y}$ are well-separated in the $p$-adic sense, then $\mathbf{N}(\vec{y}; p^{\alpha})$ can be
bounded (in fact, explicitly determined) via Hensel lifting, as observed in \cite{Hickman} (see also \cite{Wooley1996}, which treats very general systems of congruences under much stronger `non-degeneracy' hypotheses). Theorem \ref{main theorem} corresponds to the case $\vec{y} = \vec{0}_n$, which is a highly degenerate situation where there is no $p$-adic separation between the components of $\vec{y}$, and therefore acts as a counterpoint to the observations of \cite{Hickman}.  
\item The problem of counting factorisations of polynomials arose naturally in a recent study of the so-called \emph{Fourier restriction phenomenon} for curves over $\Z/N\Z$ \cite{Hickman} (see also \cite{Hickman3}). Fixing a polynomial curve $\gamma \colon   \Z/N\Z \to [ \Z/N\Z]^n$, the Fourier restriction problem involves the estimation of weighted exponential sums
\begin{equation}\label{extension operator}
\mathcal{E}G(\vec{y}\,) := \sum_{x \in \Z/N\Z} G(x)e^{2 \pi i \gamma(x) \cdot \vec{y}/N}
\end{equation}
defined for any coefficient function $G \colon \Z/N\Z \to \C$. In \cite{Hickman} a conjectural upper bound for $\mathbf{N}(\vec{y}; p^{\alpha})$ is stated and a proof is given under additional hypotheses on $\vec{y}$ (see item 1). Moreover, good control over $\mathbf{N}(\vec{y};p^{\alpha})$ is shown to imply favourable estimates for \eqref{extension operator} in the prototypical case where $\gamma(x) := (x, x^2, \dots, x^n)$. It is remarked that for $\vec{y} = \vec{0}_n$ the conjectured upper bound for $\mathbf{N}(\vec{0}_n; p^{\alpha})$ from \cite{Hickman} is trivial. The strengthened estimate of Theorem \ref{main theorem} does not appear to be directly applicable to Fourier restriction theory, but it is likely that the methods of proof will be useful in future studies. Furthermore, the problem of counting factorisations of monomials over $\Z/N\Z$ is arguably of some inherent interest. 
\item Generalising the above notation, let $\mathbf{N}(\vec{0}_n; N)$ denote the normalised number of factorisations of $X^n$ modulo $N$ for $N \in \N$. By the Chinese remainder theorem, $\mathbf{N}(\vec{0}_n; N)$ is a multiplicative function of $N$. If all the prime factors $p$ of $N$ satisfy $p > n$, then Theorem \ref{main theorem} implies that for all $\varepsilon > 0$ there exists a constant $C_{\varepsilon, n}$ such that
\begin{equation}\label{general N estimate}
\mathbf{N}(\vec{0}_n; N) \leq C_{\varepsilon, n} N^{\varepsilon} \prod_{p \mid N} p^{-e_n(\mathrm{ord}_p(N))} 
\end{equation}
where the product is over all prime factors of $N$ and the integer $\mathrm{ord}_p(N)$ is the multiplicity of the prime divisor $p$ (so that $N = \prod_{p \mid N} p^{\mathrm{ord}_p(N)}$). Indeed, Theorem \ref{main theorem} immediately yields \eqref{general N estimate} with $C_{\varepsilon, n}N^{\varepsilon}$ replaced with 
\begin{equation*}
C_{n}^{\omega(N)} \prod_{p \mid N} \mathrm{ord}_p(N) \leq C_n^{\omega(N)} \bigg(\frac{\log N}{\omega(N)}\bigg)^{\omega(N)} 
\end{equation*}
where $\omega(N) := \sum_{p \mid N} 1$ is the number of distinct prime divisors of $N$. Note that, as a simple and well-known consequence of Tchebychev's theorem on the size of the prime counting function, 
\begin{equation*}
\omega(N) \ll \frac{\log N}{\log \log N}.
\end{equation*}
By combining these observations, and considering the cases $\omega(N) \geq \varepsilon \cdot \log N/\log \log N$ and $\omega(N) < \varepsilon \cdot \log N/\log \log N$ separately, one readily deduces \eqref{general N estimate}. 
\item The authors have not attempted to optimise the values of the implied constants in Theorem \ref{main theorem} (that is, neither the size of the dimensional constant in \eqref{main theorem estimate}, nor the lower bound on $p$). It is likely that improvements would follow from a more thorough analysis of systems of power sums over finite fields.  
\end{enumerate}
\end{remark}

Theorem \ref{main theorem} is a consequence of a more general result concerning \emph{non-degenerate} systems of congruences.\footnote{The notion of non-degeneracy discussed here is distinct from that appearing above in Remark \ref{main theorem remark} \ref{general polynomials remark}).} 

\begin{definition}\label{non-degenerate} An $m$-tuple of homogeneous polynomials $\vec{f} =(f_1, \dots, f_m) \in \Z[X_1, \dots, X_n]$ with $1 \leq m \leq n$ is said to be non-degenerate over $\mathbb{F}_p$ for a prime $p$ if for each $1 \leq r \leq m$ one has
\begin{equation*}
\mathrm{rank} \frac{\partial (f_1, \dots, f_r)}{\partial (X_1, \dots, X_n)} (x) = r
\end{equation*}
whenever $x \in \mathbb{P}^{n-1}(\overline{\mathbb{F}}_p)$ satisfies $f_k(x) = 0$ for all $1 \leq k \leq r$. 
\end{definition}

Here $\overline{\mathbb{F}}_p$ denotes the algebraic closure of the $p$-field $\mathbb{F}_p$ and $\mathbb{P}^{n-1}(\overline{\mathbb{F}}_p)$ the $(n-1)$-dimensional projective space over $\overline{\mathbb{F}}_p$. Any $f \in \Z[X_1, \dots X_n]$ can be considered a polynomial over $\mathbb{F}_p$ by reducing the coefficients modulo $p$; if $f$ is homogeneous, then it can also be considered a polynomial over $\mathbb{P}^{n-1}(\overline{\mathbb{F}}_p)$. 

For $1 \leq m \leq n$ let $\vec{f} = (f_1, \dots, f_{m}) \in \Z[X_1, \dots, X_n]^{m}$ be an $m$-tuple of polynomial mappings, where each $f_k$ is homogeneous of degree $d_k$ and $1\leq d_1 \leq \dots \leq d_m$ and suppose that $\vec{f}$ is non-degenerate 
over $\mathbb{F}_p$ for some prime $p$. For $\alpha \in \N$ one wishes to estimate 
\begin{equation*}
\mathbf{\mathcal N}(\vec{f}; p^{\alpha}) := p^{-\alpha n} \#\Big\{ \vec{x} \in [\Z/p^{\alpha}\Z]^n : f_k(\vec{x}\,) \equiv 0 \bmod p^{\alpha} \textrm{ for $1\leq k \leq m$} \Big\}.
\end{equation*}
In order to state the results, let $\sigma_{r} := \sum_{k=1}^r d_k$ for $0 \leq r \leq m$ (here $\sigma_0 :=0$) and
\begin{equation*}
\alpha_r := 
\left\{ \begin{array}{ll}
\infty & \textrm{for $r = 0$}\\
\lceil \frac{\alpha}{d_{r}}\rceil & \textrm{for $1 \leq r \leq m$} \\
0 & \textrm{for $r = m+1$}
\end{array}\right. . 
\end{equation*}
 Define
\begin{equation*}
e(\vec{f};\alpha, r) := r\alpha + (n - \sigma_{r}) \cdot  \left\{\begin{array}{ll}
\alpha_{r+1}  &\textrm{if $\sigma_{r} \leq n$} \\[6pt]
 \alpha_r  - 1  &\textrm{if $\sigma_{r} \geq n$}
\end{array}\right. 
\end{equation*}
and $e(\vec{f};\alpha) := \min \{ e(\vec{f};\alpha, r) : r \in R(\vec{f};\alpha)\}$ where
\begin{equation*}
R(\vec{f};\alpha) := \big\{0 \leq r \leq \min\{m, n-1\} : \alpha_{r+1} < \alpha_r \big\}
\end{equation*}
and
\begin{equation*}
[\alpha]^{\delta(\vec{f};\sigma)} := \left\{ \begin{array}{ll}
\alpha & \textrm{if $n = \sigma_r$ for some $r \geq 1$ and $r \in R(\vec{f};\sigma)$} \\
1 & \textrm{otherwise}
\end{array}\right. . 
\end{equation*}
The general version of Theorem \ref{main theorem} is as follows. 
\begin{theorem}\label{general theorem} With the above setup, if $p$ is a sufficiently large prime, depending on $n$ and $\deg \vec{f}$, then
\begin{equation}\label{upper}
\mathbf{\mathcal N}(\vec{f},p^{\alpha}) \ll_{n, \deg\vec{f}} [\alpha]^{\delta(\vec{f};\sigma)} p^{-e(\vec{f};\alpha)}
\end{equation}
holds for all $\alpha \in \N$. 
\end{theorem}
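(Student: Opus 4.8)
The plan is to run an induction on scale, organised by the $p$-adic valuation of the coordinate vector. Given $\vec{x} \in [\Z/p^{\alpha}\Z]^n$, write $\vec{x} = p^j\vec{u}$ where $j$ is the largest integer with $p^j$ dividing every coordinate, so $\vec{u}$ lies in $[\Z/p^{\alpha-j}\Z]^n$ with some coordinate a unit. Homogeneity gives $f_k(\vec{x}) = p^{jd_k}f_k(\vec{u})$, so the system becomes $f_k(\vec{u}) \equiv 0 \bmod p^{\beta_k(j)}$ with $\beta_k(j) := \max\{\alpha - jd_k, 0\}$; since $d_1 \le \dots \le d_m$ one has $\beta_1(j) \ge \dots \ge \beta_m(j)$, and only the $f_k$ with $\beta_k(j) \ge 1$ contribute a constraint. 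If $r = r(j)$ denotes the number of such $k$, a short calculation shows $r(j) = r$ exactly when $\alpha_{r+1} \le j < \alpha_r$, so the stratum indexed by $r$ is non-empty precisely when $r \in R(\vec{f};\alpha)$; for $j \ge \alpha_1$ (the case $r = 0$) no congruence survives and the stratum contributes at most $p^{(\alpha-j)n}$ solutions, summing to $\ll p^{(\alpha-\alpha_1)n} = p^{\alpha n - e(\vec{f};\alpha,0)}$.

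Fix now $j$ with $r(j) = r \ge 1$. Discarding the free higher-order part of $\vec{u}$ (a factor $p^{(\alpha - j - \beta_1(j))n}$) and using homogeneity together with the unit-scaling action (a factor at most $p^{\beta_1(j)}$), the count reduces to that of the points $P$ of $\mathbb{P}^{n-1}(\Z/p^{\beta_1(j)}\Z)$ with $f_k(P) \equiv 0 \bmod p^{\beta_k(j)}$ for $1 \le k \le r$. This is where Definition \ref{non-degenerate} is used: every $\overline{\mathbb{F}}_p$-point of the variety $V_r := \{f_1 = \dots = f_r = 0\} \subseteq \mathbb{P}^{n-1}$ is a smooth point at which the Jacobian of $(f_1,\dots,f_r)$ has an $r \times r$ minor that is a unit modulo $p$; hence $V_r$ is a smooth complete intersection of dimension $n-1-r$, and it is empty once $r \ge n$ (the Euler relation forces the Jacobian rank to be at most $n-1$), which is why $R(\vec{f};\alpha)$ is truncated at $\min\{m,n-1\}$. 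Lifting a fixed smooth $\mathbb{F}_p$-point of $V_r$ one precision step at a time — the induction-on-scale step — via the multivariable inverse function theorem over $\Z_p$ applied to this minor in a suitable affine chart, one finds that the number of $P$ reducing to it and meeting all the congruences is $p^{(\beta_1(j)-1)(n-1-r)}\prod_{k=1}^r p^{\beta_1(j)-\beta_k(j)}$, independently of the point. Together with the classical bound $\#V_r(\mathbb{F}_p) \ll_{n,\deg\vec{f}} p^{n-1-r}$ (available for $p$ large, as assumed) and the identity $\beta_k(j) = \alpha - jd_k$ for $k \le r$, this bounds the stratum by $\ll_{n,\deg\vec{f}} p^{\alpha n - e_j}$ with $e_j = r\alpha + j(n - \sigma_r)$.

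It remains to sum over $j$. For fixed $r$ the quantity $e_j$ is monotone in $j \in [\alpha_{r+1}, \alpha_r - 1]$: increasing if $\sigma_r < n$, decreasing if $\sigma_r > n$, and constant if $\sigma_r = n$. Hence the geometric sum is controlled by the endpoint $j = \alpha_{r+1}$ when $\sigma_r \le n$ and by $j = \alpha_r - 1$ when $\sigma_r \ge n$ — exactly the two branches defining $e(\vec{f};\alpha,r)$ — at the cost of a factor at most $\alpha$ in the single borderline case $\sigma_r = n$, which is accounted for by $[\alpha]^{\delta(\vec{f};\sigma)}$. Summing the finitely many strata and dividing by $p^{\alpha n}$ yields $\mathbf{\mathcal N}(\vec{f};p^{\alpha}) \ll_{n,\deg\vec{f}} [\alpha]^{\delta(\vec{f};\sigma)}p^{-e(\vec{f};\alpha)}$, which is \eqref{upper}.

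The step I expect to be most delicate is the Hensel-lifting count with the several distinct moduli $\beta_1(j) \ge \dots \ge \beta_r(j)$: one must translate the rank hypothesis into a single unit $r \times r$ Jacobian minor relative to an appropriate chart, run the inverse function theorem over $\Z_p$ for that minor, and verify that the lift count is genuinely uniform over the smooth points of $V_r$, so that the stratum count factors cleanly as $\#V_r(\mathbb{F}_p)$ times a quantity depending only on $n$, $r$ and the $\beta_k(j)$. The remaining ingredients — the point-count bound for $V_r$ and the summation of the geometric series — are respectively classical and routine.
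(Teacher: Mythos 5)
Your proposal is correct and follows essentially the same route as the paper: your stratification by the valuation $j$ is the paper's decomposition of $\Z_p^n$ into annuli, your grouping by the number $r(j)$ of surviving congruences reproduces the ranges $\alpha_{r+1}\le l<\alpha_r$ indexed by $R(\vec f;\alpha)$, your one-step Hensel lifting via a unit $r\times r$ Jacobian minor is the paper's induction-on-scale (Lemma \ref{main estimate lemma} together with the hyperplane-transversality Lemma \ref{intersecting planes lemma}), and the finite-field input and geometric-series summation are identical. Your per-point lift count and the resulting stratum exponent $e_j=r\alpha+j(n-\sigma_r)$ agree exactly with the paper's formula, so the argument goes through as you describe.
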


In many instances this theorem can also be shown to be sharp in the sense of Theorem \ref{main theorem}; see the discussion in $\S$\ref{proof section} for more details.

It is shown in $\S$\ref{application section} that the $n$-tuple of power sums $(P_1, \dots, P_n)$ is non-degenerate over $\mathbb{F}_p$ for all primes $p > n$. In this case $d_k = k$ and $\sigma_r = \triangle_r$, so that the exponent $e(\vec{f};\alpha)$ in Theorem \ref{general theorem} reduces to the exponent $e_n(\alpha)$ appearing in Theorem \ref{main theorem}. Thus, the upper bound in \eqref{main theorem estimate} is a consequence of \eqref{upper}.

%{\color{red} the precise range of 
%$\alpha$ and $n$ where \eqref{asymptotics} holds
%can be determined; for instance Theorem \ref{main theorem} states that
%\eqref{asymptotics} holds for all $\alpha\geq 1$ and $n\not= 3$ when $\vec{f}$ is 
%the system of power sums $\vec{P}$. Also the proof of Theorem \ref{general theorem} can be used
%to verify the asymptotics \eqref{asymptotics} in many cases where the degrees are not necessarily distinct
%however we do not strive here to pin down the most general result.}
%the discussion in $\S$\ref{proof section} for more details. 

The notion of non-degeneracy introduced above is very strong and it is natural to ask whether sharp bounds for $\mathbf{\mathcal N}(\vec{f}; p^{\alpha})$ can be established under weakened hypotheses. If one does not impose any kind of non-degeneracy condition, then this is a very difficult problem. Indeed, the simple case of a single homogeneous polynomial in two variables was only recently understood \cite{Wright}; the problem for a single homogeneous polynomial in $n$ variables remains open and is closely related to certain long-standing conjectures of Igusa (see \cite{Igusa1978} and also \cite{Denef1991}). Denef and Sperber \cite{Denef2001} (see also \cite{Cluckers2008, Cluckers2010}) considered the case of a single homogeneous polynomial $f$ in $n$ variables under the hypothesis that $f$ is \emph{non-degenerate with respect to its Newton diagram} (see \cite{Denef2001} for the relevant definitions). Although related, the present notion of non-degeneracy is somewhat different; for instance, $f(x,y,z) = (x-y)^2 + x z$ is non-degenerate in the sense of Definition \ref{non-degenerate} but it is not non-degenerate with respect to its Newton diagram. On the other hand, $f(x,y,z) = x y z$ is non-degenerate with respect to its Newton diagram yet
it fails to satisfy the condition in Definition \ref{non-degenerate}.

%It is plausible that an analogous notion of non-degeneracy could be formulated for systems of homogeneous polynomials and %Theorem \ref{general theorem} extended in this direction. 
%Indeed, the argument used to establish Theorem \ref{general theorem} is similar to that used by Denef and Sperber %\cite{Denef2001}, and can be adapted to give estimates for $\mathbf{\mathcal N}(\vec{f}; p^{\alpha})$ in terms of the Newton %diagrams of $f_1, \dots, f_m$. However, there are a number of difficulties in establishing a satisfactory generalisation of the Denef-%Sperber result for systems of congruences, and this remains an interesting possible future direction. 

The introduction is concluded with a brief sketch of the methods used to prove Theorem \ref{general theorem}. The problem can be lifted to the $p$-adic setting and reformulated as an estimate of the Haar measure of certain sub-level sets defined over $\Z_p^n$. An induction-on-scale procedure is then applied to determine the size of these sub-level sets. The base case and inductive step for this induction-on-scale can be loosely summarised as follows:
\begin{itemize}
\item The base case corresponds to studying the system 
\begin{equation}\label{general system}
f_k(\vec{X}) \equiv 0 \bmod p \qquad \textrm{for $1 \leq k \leq m$.}
\end{equation} 
Upper and lower bounds on the number of solutions of such systems can be obtained by appealing to classical results from algebraic geometry, such as the Lang--Weil bound \cite{Lang1954}. 
\item To establish the inductive step one must verify certain transversality conditions which naturally arise in the analysis. This involves showing that certain configurations of hyperplanes in $\mathbb{F}_p^n$ are in general position. Thus, the induction-on-scale effectively reduces a non-linear problem over rings with zero divisors to a linear algebra problem over finite fields. 
\end{itemize}

This article is organised as follows: $\S$\ref{algebraic preliminaries section} and $\S$\ref{proof section} contain the proof of Theorem \ref{general theorem}. In particular, certain algebraic preliminaries are discussed in $\S$\ref{algebraic preliminaries section} whilst $\S$\ref{proof section} contains the main details of the aforementioned induction argument. In $\S$\ref{application section} Theorem \ref{general theorem} is shown to imply the upper bound in Theorem \ref{main theorem}. In $\S$\ref{lower bounds section} there is a detailed discussion of the lower bound in Theorem \ref{main theorem} and the $n=3$ case. The paper concludes with an appendix which provides details of various facts from algebraic geometry and commutative algebra used to analyse the system \eqref{general system}. 

\begin{acknowledgment} The authors would like to thank Julia Brandes and Jordan Ellenberg for interesting discussions on topics related to this project. This material is based upon work supported by the National Science Foundation under Grant No. DMS-1440140 while the first author was in residence at the Mathematical Sciences Research Institute in Berkeley, California, during the Spring 2017 semester.  
\end{acknowledgment}

%%%%%%%%%%%%%%%%%%%%%%%%%%%%%%%%%%%%%%%%%%%%%%%%%%%%%%%%%%%%%%%%%%%%%%%%%%%%%%%%%%%%%%%%%%%%%%%%

%                                          PRELIMINARY OBSERVATIONS

%%%%%%%%%%%%%%%%%%%%%%%%%%%%%%%%%%%%%%%%%%%%%%%%%%%%%%%%%%%%%%%%%%%%%%%%%%%%%%%%%%%%%%%%%%%%%%%%

\section{Algebraic Preliminaries}\label{algebraic preliminaries section}

Throughout this section let $1 \leq m \leq n$ and $\vec{f} := (f_1, \dots, f_m) \in \Z[X_1, \dots, X_n]$ be a system of homogeneous polynomials satisfying the non-degeneracy hypothesis over $\mathbb{F}_p$. The proofs of Theorem \ref{main theorem} and Theorem \ref{general theorem} will require estimates for the number of solutions to each of the partial systems of congruences 
\begin{equation}\label{partial system}
f_j(\vec{X}\,) \equiv 0 \mod p \quad \textrm{for $1 \leq j \leq r$}
\end{equation}
for $1 \leq r \leq m$.

\begin{lemma}\label{trivial solution lemma} If $r = m = n$, then the system \eqref{partial system} has a unique (trivial) solution. 
\end{lemma}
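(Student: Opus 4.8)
The plan is to leverage the non-degeneracy hypothesis directly at $r = m = n$. Here $\vec{f} = (f_1, \dots, f_n)$ with each $f_k$ homogeneous, and we are counting points $x \in \mathbb{F}_p^n$ with $f_k(x) = 0$ for all $1 \leq k \leq n$. Since every $f_k$ is homogeneous, $x = \vec{0}_n$ is automatically a solution — this is the trivial solution — so the content of the lemma is that there are \emph{no others}. First I would argue by contradiction: suppose $x \neq \vec{0}_n$ satisfies $f_k(x) = 0$ for all $k$. Then $x$ determines a well-defined point $[x] \in \mathbb{P}^{n-1}(\mathbb{F}_p) \subseteq \mathbb{P}^{n-1}(\overline{\mathbb{F}}_p)$, and by homogeneity $f_k([x]) = 0$ for all $1 \leq k \leq n$. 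In particular, taking $r = n = m$ in Definition \ref{non-degenerate}, the Jacobian matrix
\begin{equation*}
\frac{\partial(f_1, \dots, f_n)}{\partial(X_1, \dots, X_n)}(x)
\end{equation*}
must have rank $n$, i.e. it is invertible over $\overline{\mathbb{F}}_p$.

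Next I would derive a contradiction from this invertibility. The key observation is Euler's identity for homogeneous polynomials: if $f_k$ is homogeneous of degree $d_k$, then $\sum_{i=1}^n X_i \, \partial_{X_i} f_k = d_k f_k$. Evaluating at our putative solution $x$, the right-hand side vanishes since $f_k(x) = 0$, so we obtain
\begin{equation*}
\sum_{i=1}^n x_i \, \partial_{X_i} f_k(x) = 0 \qquad \text{for all } 1 \leq k \leq n.
\end{equation*}
This says precisely that the nonzero vector $x$ lies in the kernel of the Jacobian matrix (acting on the right), so that matrix has nontrivial kernel and hence rank strictly less than $n$. This contradicts the non-degeneracy conclusion that the rank equals $n$. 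Therefore no nonzero solution exists, and the only solution to \eqref{partial system} with $r = m = n$ is the trivial one.

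The main subtlety — though it is not really an obstacle — is that the rank condition in Definition \ref{non-degenerate} is stated over the algebraic closure $\overline{\mathbb{F}}_p$ and on projective space, so one must be careful to pass correctly between an affine solution $x \in \mathbb{F}_p^n \setminus \{\vec{0}_n\}$ and the corresponding projective point, and to note that the vanishing and rank statements are unaffected by this passage (rank of the Jacobian is well-defined on $\mathbb{P}^{n-1}$ up to the scaling inherent in homogeneity, and in any case here we only need the rank to drop, which is a Zariski-closed condition insensitive to field extension). One should also remark that the degenerate possibility $d_k$ divisible by $p$ in Euler's identity is harmless: even if some $d_k \equiv 0 \bmod p$, the right-hand side $d_k f_k(x)$ still vanishes because $f_k(x) = 0$, so the kernel conclusion stands regardless. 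Hence the argument is essentially a one-line application of Euler's formula against the non-degeneracy hypothesis.
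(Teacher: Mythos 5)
Your proposal is correct and follows the same route as the paper: Euler's identity $\sum_{i} X_i\,\partial_{X_i} f_k = d_k f_k$ evaluated at a putative nonzero solution places that solution in the kernel of the Jacobian, which contradicts the full-rank conclusion of the non-degeneracy hypothesis. Your additional remarks on passing to projective space and on the case $p \mid d_k$ are accurate but not needed beyond what the paper's one-line argument already uses.
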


\begin{proof} If $\vec{x} \in \mathbb{F}_p^n$ satisfies $f_j(\vec{x}\,) \equiv 0 \bmod p$, then, by Euler's formula for homogeneous polynomials, $\langle \nabla f_j(\vec{x}\,), \vec{x} \rangle = 0$. Hence, the only solution to \eqref{partial system} for $r = n$ is $\vec{x} = \vec{0}$, since the non-degeneracy hypothesis implies that $\{\nabla f_j(\vec{x}\,) : 1 \leq j \leq n\}$ forms a basis of $\mathbb{F}_p^n$ whenever $\vec{x} \neq \vec{0}$.   
\end{proof}

Counting the number of solutions to \eqref{partial system} when $r < n$ is more involved and is achieved by appealing to standard estimates from algebraic geometry. For this, it will be convenient to work over projective space. In particular, for $1 \leq r \leq m$ define
\begin{equation}\label{projective variety}
V_r := \{x \in \mathbb{P}^{n-1}(\overline{\mathbb{F}}_p) : f_j(x) = 0 \textrm{ for $1 \leq j \leq r$}\}
\end{equation}
and let $V_r(\mathbb{F}_p)$ denotes the set of $\mathbb{F}_p$-rational points of $V_r$; here a point $x \in \mathbb{P}^{n-1}(\mathbb{F}_p)$ is $\mathbb{F}_p$-rational if it can be expressed in homogeneous coordinates as $x = [x_1 : \dots : x_n]$ with $x_1, \dots, x_n \in \mathbb{F}_p$.

\begin{lemma}[Schwarz--Zippel-type bound]\label{Schwarz--Zippel lemma} For $1 \leq r \leq \min\{m, n-1\}$ one has
\begin{equation*}
|V_r(\mathbb{F}_p)| \leq \big(\prod_{j=1}^r \deg f_j \big) \cdot |\mathbb{P}^{n-r-1}(\mathbb{F}_p)|.
\end{equation*}
\end{lemma}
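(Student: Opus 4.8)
The plan is to prove the bound by induction on $r$, slicing by a hyperplane at each step. For the base case $r = 1$, the variety $V_1 = \{f_1 = 0\}$ is a hypersurface in $\mathbb{P}^{n-1}$, and the classical Schwarz--Zippel bound (or a direct count) gives $|V_1(\mathbb{F}_p)| \leq (\deg f_1) |\mathbb{P}^{n-2}(\mathbb{F}_p)|$, since a homogeneous polynomial of degree $d$ in $n$ variables vanishes on at most $d \cdot |\mathbb{P}^{n-2}(\mathbb{F}_p)|$ projective points (restrict to each line through a fixed point, or argue via the affine Schwarz--Zippel lemma and project). This matches the claimed inequality for $r = 1$.

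For the inductive step, suppose the bound holds for $r-1$ and consider $V_r = V_{r-1} \cap \{f_r = 0\}$. Here I would use the non-degeneracy hypothesis crucially: at every point $x \in V_{r-1}$, the rank of the Jacobian of $(f_1, \dots, f_{r-1})$ is exactly $r-1$, so $V_{r-1}$ is smooth of pure dimension $n - r$ (as a projective variety, $\dim = n - 1 - (r-1) = n - r$), hence each irreducible component of $V_{r-1}$ has dimension $n-r$ and there are at most $\prod_{j=1}^{r-1} \deg f_j$ of them counted with multiplicity, by B\'ezout. Intersecting each such component $W$ with the hypersurface $\{f_r = 0\}$: either $f_r$ vanishes identically on $W$ — but this is excluded by non-degeneracy, since on such a component the Jacobian of $(f_1, \dots, f_r)$ would have rank $\leq r-1 < r$ at points of $W$, contradicting the hypothesis — or $f_r$ cuts $W$ down to dimension $n - r - 1$, and the number of $\mathbb{F}_p$-points on this slice is controlled.

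The main obstacle is making the point-count on each slice both uniform and sharp with the correct constant. The cleanest route is to invoke the general Schwarz--Zippel / Lang--Weil-type estimate for the number of $\mathbb{F}_p$-points on a variety of dimension $\delta$ and degree $D$ in $\mathbb{P}^{n-1}$, namely $\leq D \cdot |\mathbb{P}^{\delta}(\mathbb{F}_p)|$: applying this to $V_r$, which has dimension $n-r-1$ and degree at most $\prod_{j=1}^r \deg f_j$ by iterated B\'ezout (using that no component of any $V_j$ lies in $\{f_{j+1} = 0\}$, which is exactly what non-degeneracy guarantees), yields the claim directly. Alternatively, one can run the hyperplane-slicing argument by hand: fix a generic point, restrict to lines, and sum; here the delicate part is ensuring that the exceptional loci where the naive slicing fails contribute no more than the degree bookkeeping allows. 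Either way, the essential input is that non-degeneracy forces each successive hypersurface section to be proper (no component is swallowed), which is what lets the degrees multiply rather than the bound degenerating.
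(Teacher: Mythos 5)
Your ``cleanest route'' is exactly the paper's proof: Lemma \ref{Schwarz--Zippel lemma} is obtained by citing the general projective Schwarz--Zippel bound $|X(\mathbb{F}_p)|\leq \deg(X)\cdot|\mathbb{P}^{\dim X}(\mathbb{F}_p)|$ (Lachaud--Rolland), with the only substantive work being the verification that $\dim V_r = n-1-r$, which the paper defers to the appendix and carries out via heights of the ideals $I_r = \langle f_1,\dots,f_r\rangle$ and the generalised Krull principal ideal theorem rather than your geometric smoothness/proper-intersection argument. Your justification that non-degeneracy prevents $f_r$ from vanishing on a component of $V_{r-1}$ (the gradient would fall into the span of $\nabla f_1,\dots,\nabla f_{r-1}$) is sound and mirrors the appendix's proof that $I_{r-1}\subset I_r$ is strict, so the proposal is correct and essentially the same as the paper's.
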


This lemma is a direct application of a well-known \emph{Schwarz--Zippel-type bound}\footnote{The terminology comes from comparison with the classical Schwarz-Zippel bound, which essentially corresponds to the case $m=1$.} which applies to general projective varieties over $\mathbb{F}_p$: see, for instance, \cite[Corollary 2.2]{Lachaud2015}. To apply the Schwarz--Zippel bound one must demonstrate that each $V_r$ is a projective variety of dimension $n-r-1$; since $V_r$ is defined by $r$ homogeneous polynomial equations, given the non-degeneracy hypothesis it is intuitively clear that $\dim V_r = n - 1 - r$ should hold. However, the notion of dimension used here is of a precise algebraic-geometric nature and the verification of the condition $\dim V_r = n - 1 - r$ is postponed until the appendix. 

To establish the lower bound in Theorem \ref{main theorem} and the sharpness of the estimates in Theorem \ref{general theorem}, one is also required to bound $|V_r(\mathbb{F}_p)|$ from below. 

\begin{proposition}[Lang--Weil estimate]\label{classical estimates proposition} For $1 \leq r \leq  \min\{m, n-2\}$ one has
\begin{equation}\label{Lang--Weil estimate}
\big||V_r(\mathbb{F}_p)| - |\mathbb{P}^{n-r-1}(\mathbb{F}_p)|\big| \ll_{\deg\vec{f},n,r} p^{n-r-3/2}.
\end{equation}
\end{proposition}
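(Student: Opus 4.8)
The plan is to deduce \eqref{Lang--Weil estimate} from the classical Lang--Weil bound \cite{Lang1954} applied to the variety $V_r$, together with the structural information about $V_r$ that is established (or will be established) in the appendix. Recall that the Lang--Weil bound asserts that if $W \subseteq \mathbb{P}^{N}(\overline{\mathbb{F}}_p)$ is a geometrically irreducible projective variety of dimension $d$ and degree $e$ defined over $\mathbb{F}_p$, then
\begin{equation*}
\big| |W(\mathbb{F}_p)| - |\mathbb{P}^{d}(\mathbb{F}_p)| \big| \ll_{N, d, e} p^{d - 1/2}.
\end{equation*}
(More precisely, the count differs from $p^d$ by $O(p^{d-1/2})$, and $|\mathbb{P}^d(\mathbb{F}_p)| = p^d + p^{d-1} + \dots + 1 = p^d + O(p^{d-1})$, so the two formulations agree up to the stated error.) The strategy is therefore to verify that $V_r$ satisfies the hypotheses of this bound with the correct numerical parameters: dimension $d = n - r - 1$ and degree $e = \prod_{j=1}^r \deg f_j$, the latter being controlled purely in terms of $\deg \vec{f}$.

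First I would record that $\dim V_r = n - r - 1$; this is precisely the fact invoked for Lemma \ref{Schwarz--Zippel lemma} and whose verification is deferred to the appendix, so I may quote it here. The restriction $r \leq n - 2$ in the statement is exactly what guarantees $d = n - r - 1 \geq 1$, so that the main term $|\mathbb{P}^{n-r-1}(\mathbb{F}_p)|$ genuinely grows and the error term $p^{n-r-3/2} = p^{d - 1/2}$ is of smaller order; this is the reason the range here is more restrictive than in Lemma \ref{Schwarz--Zippel lemma}. Next I would bound the degree of $V_r$: since $V_r$ is cut out by $r$ hypersurfaces of degrees $\deg f_1, \dots, \deg f_r$, the refined B\'ezout inequality gives $\deg V_r \leq \prod_{j=1}^r \deg f_j$, a quantity depending only on $\deg \vec{f}$, which absorbs into the implied constant. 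The ambient dimension is $n - 1$, again accounted for in the dependence on $n$.

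The main obstacle is the geometric irreducibility of $V_r$: the Lang--Weil bound requires $V_r$ to be a geometrically irreducible variety (equivalently, its defining ideal, after base change to $\overline{\mathbb{F}}_p$, is prime), and the non-degeneracy hypothesis of Definition \ref{non-degenerate} does not obviously give this. I expect to handle this by a smoothness argument: the non-degeneracy hypothesis states that the Jacobian of $(f_1, \dots, f_r)$ has full rank $r$ at every point of $V_r$, which means $V_r$ is a \emph{smooth} complete intersection in $\mathbb{P}^{n-1}(\overline{\mathbb{F}}_p)$ of the expected dimension $n - r - 1$. A smooth variety is regular, hence its local rings are integral domains, so its connected components coincide with its irreducible components; and a smooth complete intersection of positive dimension in projective space is connected (this is a standard consequence of the fact that complete intersections of dimension $\geq 1$ are connected, e.g.\ via the long exact sequence in cohomology of the Koszul complex, or by a Bertini/Lefschetz-type argument). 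For $r \leq n - 2$, one has $\dim V_r \geq 1$, so $V_r$ is connected and smooth, therefore geometrically irreducible. This connectedness input, which should be stated and proved (or quoted) in the appendix alongside the dimension computation, is the crux of the argument; once it is in place, \eqref{Lang--Weil estimate} follows immediately by feeding $d = n - r - 1$, $e \leq \prod_{j=1}^r \deg f_j$, and $N = n - 1$ into the Lang--Weil bound.
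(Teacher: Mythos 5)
Your proposal matches the paper's argument: the estimate is obtained as a direct application of the classical Lang--Weil bound, with the essential input being the absolute irreducibility of $V_r$ for $1 \leq r \leq n-2$, which the paper establishes in the appendix by exactly the route you sketch (non-degeneracy gives smoothness via the Jacobian criterion, the complete-intersection structure of dimension $\geq 1$ gives connectedness via the Hartshorne connectedness theorem, and smooth plus connected yields irreducible), together with the dimension computation $\dim V_r = n-1-r$. Your treatment of the degree and ambient-dimension dependence in the implied constant is consistent with the paper's $\ll_{\deg\vec{f},n,r}$ convention, so the proposal is correct and follows essentially the same path.
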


Proposition \ref{classical estimates proposition} is a direct application of the classical estimate of Lang--Weil \cite{Lang1954}. In order to apply the Lang--Weil theorem, one must verify that the variety $V_r$ is \emph{absolutely irreducible}\footnote{See the appendix for the relevant definitions.} when $1 \leq r \leq n-2$; establishing this property requires some algebraic geometry and the proof is discussed in detail in the appendix.  It is remarked that one may obtain stronger estimates by applying the deep work of Deligne \cite{Deligne1974}; here the Lang--Weil inequality is preferred due to its relative simplicity (in particular, there exist fairly elementary proofs of the Lang--Weil theorem: see \cite{Stepanov1969, Bombieri1974}).

Finally, it is remarked that for the prototypical example of $(P_1, \dots, P_n)$ an upper bound on the number of solutions in $\mathbb{F}_p^n$ can be obtained using very elementary methods. 

\begin{lemma}\label{partial system lemma} For $1 \leq r \leq n$ one has
\begin{equation*}
\mathbf{\mathcal N}(P_1, \dots, P_r; p)\leq n! p^{-r}.
\end{equation*}
\end{lemma}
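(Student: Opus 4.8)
The plan is to convert the count of congruence solutions into a count of monic polynomials with prescribed vanishing coefficients, exploiting the classical dictionary between power sums and elementary symmetric functions (this is the same Newton--Girard input already used in the introduction). Unwinding the definition of $\mathbf{\mathcal N}(P_1, \dots, P_r; p)$, it suffices to show
\[
\#\big\{ \vec{x} \in \mathbb{F}_p^n : P_1(\vec{x}\,) = \cdots = P_r(\vec{x}\,) = 0 \big\} \leq n!\, p^{n-r}.
\]
Write $e_1(\vec{X}), \dots, e_n(\vec{X})$ for the elementary symmetric polynomials in $X_1, \dots, X_n$ and recall Newton's identity $k\,e_k = \sum_{i=1}^{k} (-1)^{i-1} e_{k-i} P_i$ (with $e_0 = 1$), valid in $\Z[X_1, \dots, X_n]$ for all $k \geq 1$. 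Since $p$ is a sufficiently large prime (in particular $p > n \geq r$), each of $1, \dots, r$ is a unit in $\mathbb{F}_p$; evaluating the identity at a point $\vec{x}$ with $P_1(\vec{x}\,) = \cdots = P_r(\vec{x}\,) = 0$, the right-hand side vanishes for every $1 \leq k \leq r$, and hence $e_1(\vec{x}\,) = \cdots = e_r(\vec{x}\,) = 0$.

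Next I would bound the fibres of the map $\Phi \colon \mathbb{F}_p^n \to \mathbb{F}_p^n$, $\Phi(\vec{x}\,) := (e_1(\vec{x}\,), \dots, e_n(\vec{x}\,))$. Since $\Phi(\vec{x}\,)$ records (up to sign) the coefficients of $\prod_{j=1}^n (T - x_j)$, two tuples have the same image under $\Phi$ precisely when they have the same multiset of coordinates; consequently the fibre of $\Phi$ over any value is the set of orderings of a fixed multiset of size $n$, and so has at most $n!$ elements. Combining this with the previous paragraph,
\[
\#\big\{ \vec{x} \in \mathbb{F}_p^n : P_1(\vec{x}\,) = \cdots = P_r(\vec{x}\,) = 0 \big\} \;\leq\; n! \cdot \#\big\{ \vec{v} \in \Phi(\mathbb{F}_p^n) : v_1 = \cdots = v_r = 0 \big\} \;\leq\; n!\, p^{n-r},
\]
the last step because such $\vec{v}$ lie in $\{0\}^r \times \mathbb{F}_p^{\,n-r}$. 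Dividing by $p^n$ yields $\mathbf{\mathcal N}(P_1, \dots, P_r; p) \leq n!\, p^{-r}$.

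There is no genuine obstacle here once one decides to work with the polynomial $\prod_j (T - x_j)$ rather than the congruences directly; the argument is entirely elementary. The only point demanding a little care is the appeal to Newton's identities, which really does require $p > r$ (the running hypothesis $p > n$ is more than enough): for small primes the implication ``$P_1 = \cdots = P_r = 0 \Rightarrow e_1 = \cdots = e_r = 0$'' can fail, and a different argument would be needed. It is also worth noting that the bound is typically far from sharp — for instance when $r = n$ it reads $\mathbf{\mathcal N} \leq n!\, p^{-n}$ whereas the only solution is $\vec{x} = \vec{0}$, cf. Lemma \ref{trivial solution lemma} — but this crude estimate is all that the inductive scheme of $\S$\ref{proof section} requires.
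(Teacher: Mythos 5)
Your proof is correct and is essentially the paper's argument: both reduce the count to the observation that an $n$-tuple with prescribed symmetric-function data is an ordering of the root multiset of a fixed monic polynomial (at most $n!$ orderings), with $p^{n-r}$ admissible polynomials once $r$ of the $n$ invariants are pinned to zero. The only cosmetic difference is that you parametrise the fibres by the elementary symmetric polynomials via an explicit appeal to Newton's identities, whereas the paper parametrises by the remaining power sums $P_{r+1},\dots,P_n$ and leaves the Newton--Girard translation implicit.
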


\begin{proof} Clearly one may write
\begin{equation*}
\mathbf{\mathcal N}(P_1, \dots, P_r; p) = \sum_{\vec{y} = (0, \dots, 0, y_{r+1}, \dots, y_{n}) \in \mathbb{F}_p^{n}} {\mathcal N}(\vec{P} - \vec{y}; p)
\end{equation*}
where $\vec{P} := (P_1, \dots, P_n)$. Observe that ${\mathcal N}(\vec{P} - \vec{y}; p)$ is a normalised count of $n$-tuples of roots of a fixed univariate polynomial over $\mathbb{F}_p^n$. One therefore immediately deduces that ${\mathcal N}(\vec{P} - \vec{y}; p) \leq n!p^{-n}$ and, since the above sum is over $p^{n-r}$ choices of $\vec{y}$, this concludes the proof.
\end{proof}

%%%%%%%%%%%%%%%%%%%%%%%%%%%%%%%%%%%%%%%%%%%%%%%%%%%%%%%%%%%%%%%%%%%%%%%%%%%%%%%%%%%%%%%%%%%%%%%%

%                                          PROOF OF THE MAIN THEOREM

%%%%%%%%%%%%%%%%%%%%%%%%%%%%%%%%%%%%%%%%%%%%%%%%%%%%%%%%%%%%%%%%%%%%%%%%%%%%%%%%%%%%%%%%%%%%%%%%

\section{The proof of Theorem \ref{general theorem}}\label{proof section}

The key observation in the proof of Theorem \ref{general theorem} is the following formula, which effectively reduces the problem to estimating the size of varieties over finite fields.

\begin{proposition}\label{key formula proposition} Let $n \in \N$ and $p$ be prime. If $\vec{f} = (f_1, \dots, f_{m})$ is non-degenerate over $\mathbb{F}_p$, then
\begin{equation}\label{key formula}
\mathbf{\mathcal N}(\vec{f}; p^{\alpha}) \sim_{\deg\vec{f}} \sum_{r \in R(\vec{f}; \alpha)} c_n(\alpha,r) \cdot 
\big(\mathbf{\mathcal N}(f_1, \dots, f_r;p) - p^{-n}\big) \cdot p^{-e(\vec{f};\alpha, r) +r}
\end{equation}
where $c_{n}(\alpha,r) = \alpha_r - \alpha_{r+1}$ if $\sigma_r = n$ and $1$ otherwise.
\end{proposition}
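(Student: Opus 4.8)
The plan is to set up a $p$-adic reformulation and then run an induction-on-scale argument. First I would lift the problem to $\Z_p^n$: since $f_k(\vec{x}) \equiv 0 \bmod p^\alpha$ depends only on $\vec{x} \bmod p^\alpha$, the normalised count $\mathbf{\mathcal N}(\vec{f}; p^{\alpha})$ equals the Haar measure of the sub-level set $S_\alpha := \{ \vec{x} \in \Z_p^n : |f_k(\vec{x})|_p \leq p^{-\alpha} \textrm{ for } 1 \leq k \leq m\}$. One then partitions $S_\alpha$ according to the $p$-adic valuation profile of $\vec{x}$: writing $\vec{x} = p^j \vec{u}$ with $\vec{u}$ primitive (some coordinate a unit), homogeneity gives $f_k(\vec{x}) = p^{jd_k} f_k(\vec{u})$, so the constraint becomes $|f_k(\vec{u})|_p \leq p^{-(\alpha - jd_k)}$, i.e. $f_k(\vec{u}) \equiv 0 \bmod p^{\max\{\alpha - jd_k, 0\}}$. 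Thus the measure of $S_\alpha$ decomposes, up to the $p^{-jn}$ scaling factor from the dilation $\vec{u} \mapsto p^j\vec{u}$, into a sum over $j$ of counts of primitive solutions to the rescaled congruence system at level $\max\{\alpha - jd_k, 0\}$.

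The heart of the argument is then the inductive step: understanding, for a primitive point $\vec{u}$ satisfying $f_1, \dots, f_r \equiv 0 \bmod p$ but not $f_{r+1} \equiv 0 \bmod p$ (this $r$ being exactly the index recorded by $R(\vec{f};\alpha)$), how the higher-order congruences propagate. The non-degeneracy hypothesis says the Jacobian of $(f_1, \dots, f_r)$ has full rank $r$ at such a point in $\mathbb{P}^{n-1}(\overline{\mathbb{F}}_p)$; by Euler's identity this rank condition, together with the hyperplane configuration being in general position (the transversality claim flagged in the introduction's sketch), lets one apply a Hensel-type lifting in the $r$ "constrained" directions while the remaining $n - r$ directions are free modulo a controlled power of $p$. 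Counting these contributions gives precisely a factor $p^{r\alpha}$ from lifting in the constrained directions and a factor $(n - \sigma_r)$-fold product of $p^{-\alpha_{r+1}}$ or $p^{-(\alpha_r - 1)}$ from the free directions depending on whether $\sigma_r \leq n$ or $\sigma_r \geq n$ — which is the definition of $e(\vec{f};\alpha, r)$. The base case $\alpha = 1$ reduces to counting $\mathbb{F}_p$-points of the variety $V_r$, i.e. the quantity $\mathbf{\mathcal N}(f_1, \dots, f_r; p)$, and the $-p^{-n}$ correction accounts for removing the trivial/non-primitive contribution so that the count is genuinely of primitive points (cf. Lemma \ref{trivial solution lemma} for the extremal case $r = n$).

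I would organise the bookkeeping so that the combinatorial coefficient $c_n(\alpha, r)$ emerges naturally: it counts the number of scales $j$ at which the valuation profile lands in the same stratum $r$, which is nontrivial precisely when $\sigma_r = n$ — in that case a range of values of $j$ all give the same exponent contribution (whence $\alpha_r - \alpha_{r+1}$ distinct scales), while otherwise a single $j$ dominates. Summing the geometric-type series in $j$ for each fixed $r$, with the dominant scale identified, collapses the sum to the stated form; the factor $p^{+r}$ in $p^{-e(\vec{f};\alpha,r)+r}$ is the discrepancy between counting projective points $V_r(\mathbb{F}_p)$ (which is how $\mathbf{\mathcal N}(f_1,\dots,f_r;p)$ relates to $|V_r(\mathbb{F}_p)|$) versus affine primitive points.

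The main obstacle I anticipate is the transversality verification underpinning the inductive step: one must show that the hyperplanes arising as the tangent spaces (kernels of $\nabla f_k$) at the relevant points, together with the coordinate hyperplanes coming from the valuation stratification, are in sufficiently general position in $\mathbb{F}_p^n$ for the Hensel lifting to proceed cleanly and for the counting to be exact rather than merely an upper bound. This is where the non-degeneracy hypothesis of Definition \ref{non-degenerate} is used in full strength, and it is also where the restriction to sufficiently large $p$ (in terms of $n$ and $\deg \vec{f}$) enters, ensuring that "generic" behaviour over $\overline{\mathbb{F}}_p$ is actually realised over $\mathbb{F}_p$ and that no low-characteristic degeneracies occur. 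Establishing both the upper and matching lower bounds simultaneously — so that one obtains the two-sided $\sim_{\deg\vec{f}}$ rather than just $\ll$ — requires that the stratification be essentially a partition with no serious overlap, which again rests on the transversality.
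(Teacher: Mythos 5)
Your proposal follows essentially the same route as the paper: lift to a Haar--measure computation on $\Z_p^{n}$, decompose by the valuation of $\vec{x}$ (the paper's foliation into the annuli $\mathbb{A}_{p^{-l}}^{n}$), use homogeneity to rescale, group the scales $l$ into the ranges $\alpha_{r+1} \leq l < \alpha_r$ indexed by $R(\vec{f};\alpha)$, and evaluate each fixed-scale count by an induction on scale whose engine is the transversality of the gradient hyperplanes supplied by non-degeneracy (the paper's Lemmas \ref{main estimate lemma} and \ref{intersecting planes lemma}). Two small corrections: the index $r$ is determined by which range the valuation falls in -- equivalently, by which of the constraints $f_k(\vec{u}\,) \equiv 0 \bmod p^{\alpha - l d_k}$ are non-vacuous -- and not by requiring $f_{r+1}(\vec{u}\,) \not\equiv 0 \bmod p$; and largeness of $p$ is not needed for this proposition itself (only non-degeneracy over $\mathbb{F}_p$ is), entering instead in the applications via Lang--Weil and the verification of non-degeneracy for power sums.
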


Assuming this proposition, Theorem \ref{general theorem} readily follows from the estimates discussed in the previous section.

\begin{proof}[Proof (of Theorem \ref{general theorem})] The Schwarz--Zippel-type bound (or, in the case of power sums, Lemma \ref{partial system lemma}) from the previous section imply that
\begin{equation*}
\mathbf{\mathcal N}(f_1, \dots, f_r;p) \ll_{\deg\vec{f}, n,r} p^{-r} \qquad \textrm{for $1 \leq r \leq \min\{m, n-1\}$},
\end{equation*}
whilst Lemma \ref{trivial solution lemma} shows that $\mathbf{\mathcal N}(f_1, \dots, f_n;p) = p^{-n}$. 
Applying these estimates to the formula \eqref{key formula}, one immediately deduces that
\begin{equation*}
\mathbf{\mathcal N}(\vec{f}; p^{\alpha}) \ll_{\deg\vec{f},n} [\alpha]^{\delta(\vec{f};\sigma)} p^{-e(\vec{f};\alpha)},
\end{equation*}
as required
\end{proof}
%
%\begin{proof}[Proof (of Theorem \ref{general theorem})] The Lang--Weil estimate given by Proposition \ref{classical estimates proposition} together with Lemma \ref{only trivial solution lemma} imply that
%\begin{equation*}
%\mathbf{\mathcal N}(f_1, \dots, f_r;p) \ll_{\deg\vec{f}, n,r} p^{-r} \qquad \textrm{for $1 \leq r \leq \min\{m, n-1\}$}.
%\end{equation*}
%\marginpar{\color{red} Doesn't what Terry calls the Schwarz-Zippel type bound give an elementary
%upper bound for {\it any} variety which works for us?} 
%Applying this estimate to the formula \eqref{key formula}, one immediately deduces that
%\begin{equation*}
%\mathbf{\mathcal N}(\vec{f}; p^{\alpha}) \ll_{\deg\vec{f},n} [\alpha]^{\delta(\vec{f};\sigma)} p^{-e(\vec{f};\alpha)}.
%\end{equation*}
%Note that in the case of power sums, the application of the Lang--Weil theorem (and Bezout's theorem) can be replaced with the elementary estimate from Lemma \ref{partial system lemma}. Thus, the upper bound on the number of factorisations of a polynomial over $\Z/p^{\alpha}\Z$ can be established without recourse to algebraic geometry.
%\end{proof}

If $p$ is sufficiently large depending on $n$ and $\deg \vec{f}$, then Proposition \ref{key formula proposition} can be used to deduce effective lower bounds for $\mathbf{\mathcal N}(\vec{f}; p^{\alpha})$. A difficulty arises here due to the fact that the Lang--Weil estimate for $\mathbf{\mathcal N}(f_1, \dots, f_r ; p)$ is only available when $1 \leq r \leq n-2$. This causes complications if the minimum of $e(\vec{f}; \alpha, r)$ occurs when $r = n-1 \in R(\vec{f}; \alpha)$. In practice, this issue rarely manifests itself: in the prototypical case of the power sum system $\vec{f} = \vec{P}$ it only affects the $n=3$ case, which can be understood completely via a direct counting argument (see $\S$\ref{lower bounds section}). 

To make the above discussion more concrete, suppose that 
$\vec{f} = (f_1, \dots, f_{n})$ is non-degenerate over $\mathbb{F}_p$ and the degrees $d_k$ of the $f_k$ satisfy $d_1 < \dots < d_n$. If $\alpha$ is sufficiently large, depending on $\deg \vec{f}$, then $\alpha_1 > \dots > \alpha_n$ and so $R(\vec{f};\alpha) = \{0,1, \dots, n-1\}$. As in the discussion following Theorem \ref{main theorem}, one deduces that $e(\vec{f};\alpha) = \min \{ e(\vec{f};\alpha, r_n^-), e(\vec{f};\alpha, r_n^+)\}$ where 
\begin{equation*}
r_n^- := \max \{ r \geq 0 : \sigma_r \leq n\} \quad \textrm{and} \quad r_n^+ := \min \{ r \geq 0 : \sigma_r \geq n\}.
\end{equation*}
The Lang--Weil estimate \eqref{Lang--Weil estimate} together with \eqref{key formula} therefore imply a sharp lower bound for $\mathbf{\mathcal N}(\vec{f}; p^{\alpha})$ whenever $1 \leq r_n^+ \leq n-2$. In the case $\vec{f} = \vec{P}$ is the power sum mapping, the condition $r_n^+ \leq n-2$ holds for all $n \geq 5$. On the other hand, the $n=1,2$ cases trivially admit sharp lower bounds whilst the remaining $n = 3, 4$ cases can be analysed via slightly more involved arguments (however, when $n=3$ some anomalies arise: see $\S$\ref{lower bounds section}). 

The issue of the minimum of $e(\vec{f}; \alpha, r)$ occurring when $r = n-1 \in R(\vec{f}; \alpha)$ does not arise if the number of polynomial equations $m$ satisfies $m\le n-2$. An important case is given by $m=1$ and $n \geq 3$, which treats a single non-degenerate (homogeneous) polynomial $f$ of, say, degree $d$. Here $\alpha_1 = \lceil\frac{\alpha}{d}\rceil \ge 1$ so that $R(f;\alpha) = \{0,1\}$. If $d \leq n$, then it is easy to see that $e(f;\alpha) = e(f;\alpha,1) = \alpha$. On the other hand, if $n \leq d$ and one writes $\alpha = (\alpha_1-1) d +j$ for some $1\leq j \leq d$, then 
\begin{equation*}
e(f;\alpha) = \left\{
\begin{array}{ll}
e(f;\alpha,1) = (n/d)\alpha + j(1-n/d) & \textrm{if $j\leq n \leq d$} \\
e(f;\alpha, 0) = n \alpha_1 & \textrm{if $n\le j \le d$}
\end{array}\right. . 
\end{equation*}
Hence Proposition \ref{key formula proposition} shows that, for $p$ sufficiently large depending on $d$,
\begin{equation*}
{\mathcal N}(f; p^{\alpha}) \sim_d \left\{\begin{array}{ll}
 p^{-\alpha}  & \textrm{if $d <n$} \\
\alpha p^{-\alpha} & \textrm{if $d=n$} \\
p^{-((n/d)\alpha - j(n/d -1))} & \textrm{if $j\le n < d$} \\ 
p^{-n\alpha_1} & \textrm{if $n<d$ and $n\le j \le d$}
\end{array}\right. .
\end{equation*}
This provides a precise count of the number of roots of $f$ over $\Z/p^{\alpha}\Z$. 

\begin{proof}[Proof (of Proposition \ref{key formula proposition})] The solution count 
$\mathbf{\mathcal N}(\vec{f}; p^{\alpha})$ can be expressed as the Haar measure of a set over the $p$-adics via a simple lifting procedure. In particular, for $f \in \Z[X_1, \dots, X_{n}]$ and $l \in \N_0$ define the $p$-adic sub-level set
\begin{equation*}
S(f,p^{-l}) := \big\{\vec{z} \in \Z_p^{n} : |f(\vec{z}\,)| \leq p^{-l}\big\},
\end{equation*} 
where $|\,\cdot\,|$ is the usual $p$-adic absolute value on $\Z_p$. It then follows that
\begin{equation}\label{N estimate 1}
\mathbf{\mathcal N}(\vec{f};  p^{\alpha}) = \mu\bigg(\bigcap_{k=1}^{m} S(f_k, p^{-\alpha})\bigg),
\end{equation}
where $\mu$ denotes the (normalised) Haar measure on the compact abelian group $\Z_p^{n}$. The space $\Z_p^{n}$ is foliated into countably many concentric annuli\footnote{The basic decomposition found in the work \cite{Denef2001} of Denef and Sperber is a foliation with respect to $p$-adic rectangles or boxes instead of concentric annuli. The notion of {\emph{non-degeneracy with respect to its Newton diagram}} emerges naturally from such a decomposition whereas the present notion of non-degeneracy arises naturally from a concentric annuli decomposition.} 
\begin{equation*}
\mathbb{A}_{p^{-l}}^{n}:= \big\{ \vec{z} \in \Z_p^{n} : |\vec{z}\,| := \max\{|z_1|, \dots, |z_{n}|\} = p^{-l} \big\}
\end{equation*}
so that 
\begin{align}
\nonumber
\mu\bigg(\bigcap_{k=1}^{m} S(f_k, p^{-\alpha})\bigg) &= \sum_{l \geq 0}  \mu|_{\mathbb{A}_{p^{-l}}^{n}}\bigg(\bigcap_{k=1}^{m} S(f_k, p^{-\alpha})\bigg) \\
\label{diagonal terms}
&= \sum_{l \geq 0} p^{-l n}\mu|_{\mathbb{A}_1^{n}}\bigg(\bigcap_{k=1}^{m} S(f_k, p^{-\alpha+l d_k })\bigg).
\end{align}
Recall that $\alpha_r$ for $1 \leq r \leq m+1$ forms a sequence of non-increasing, non-negative integers. One may therefore write
\begin{equation}\label{N estimate 2}
\mu\bigg(\bigcap_{k=1}^{m} S(f_k, p^{-\alpha})\bigg) = \sum_{r=0}^{m} I_r = \sum_{r \in R(\vec{f}; \alpha)} I_r
\end{equation}
where each $I_r$ is of exactly the same form as the expression appearing on the right-hand side of \eqref{diagonal terms} but with the summation in $l$ restricted to the range $\alpha_{r+1} \leq l < \alpha_r$. If $m \leq n-1$, then $R(\vec{f}; \alpha)$ is precisely the set of indices $0 \leq r \leq m$ for which the corresponding range of summation in $I_r$ is non-empty. If $m = n$, then using Lemma \ref{trivial solution lemma} it is not difficult to see that $I_n = 0$, and thus the second equality in \eqref{N estimate 2} is justified. 

If $\alpha_{r+1} \leq l$ and $r +1 \leq k \leq m$, then $\alpha - l d_k \leq 0$ and, consequently, $S(f_k, p^{-\alpha +l d_k}) = \Z_p^{n}$. On the other hand, if $l < \alpha_r$ and $1 \leq k \leq r$, then $\alpha - l d_k \geq 1$. Combining these observations,
\begin{equation*}
I_{0} = \sum_{ \alpha_1  \leq l} p^{-l n}  \sim p^{-e(\vec{f};\alpha, 0)}
\end{equation*}
and
\begin{equation*}
I_r = \sum_{\alpha_{r+1} \leq l <  \alpha_r} p^{-l n} \mu|_{\mathbb{A}_1^{n}}\bigg(\bigcap_{k=1}^{r} S(f_k, p^{-\alpha+l d_k})\bigg)
\end{equation*}
for $1 \leq r \leq m$, where each of the exponents $\alpha - l d_k$ appearing in the above expression is at least 1.   

\begin{lemma}\label{main estimate lemma} The identity
\begin{equation}\label{main estimate 1}
\mu|_{\mathbb{A}_1^{n}}\bigg(\bigcap_{k=1}^{r} S(f_k, p^{-l_k})\bigg) =  \big(\mathbf{\mathcal N}(f_1, \dots, f_r;p) - p^{-n}\big) \cdot p^{-\sum_{k=1}^{r} l_k + r}
\end{equation}
holds for all $1 \leq r \leq m$ and $l_1 \geq \dots \geq l_r \geq 1$.
\end{lemma}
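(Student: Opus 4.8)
The plan is to prove \eqref{main estimate 1} by a $p$-adic implicit function theorem argument, which is essentially the base mechanism of the induction-on-scale referred to in the introduction. First I would reduce to a single residue disk. Decompose $\mathbb{A}_1^{n}$ into the $p^n-1$ disks $D(\bar x) := \bar x + p\Z_p^{n}$ indexed by the non-zero classes $\bar x \in \mathbb{F}_p^{n}$. Since each $l_k \geq 1$, a disk $D(\bar x)$ meets $\bigcap_{k=1}^r S(f_k, p^{-l_k})$ only if $f_k(\bar x) \equiv 0 \bmod p$ for all $1 \leq k \leq r$; as the $f_k$ are homogeneous, the excluded class $\bar x = \vec 0$ is the unique zero residue class among the mod-$p$ solutions, so there are precisely $p^n\mathbf{\mathcal N}(f_1,\dots,f_r;p) - 1$ contributing disks. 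It therefore suffices to prove that for each such disk
\begin{equation}\label{disk claim}
\mu|_{D(\bar x)}\Big(\bigcap_{k=1}^r S(f_k, p^{-l_k})\Big) = p^{-n}\cdot p^{-\sum_{k=1}^r l_k + r},
\end{equation}
since summing over the contributing disks then yields \eqref{main estimate 1}.

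To establish \eqref{disk claim}, fix such a disk and a lift $x_0 \in \Z_p^{n}$ of $\bar x$. The non-degeneracy hypothesis gives $\mathrm{rank}\,\frac{\partial(f_1,\dots,f_r)}{\partial(X_1,\dots,X_n)}(\bar x) = r$, so after permuting coordinates the $r\times r$ minor in the variables $X_1,\dots,X_r$ is a $p$-adic unit; and since each $f_k \in \Z[X_1,\dots,X_n]$, all of its first partials are constant modulo $p$ on $D(\bar x)$, so this Jacobian reduces to one fixed invertible matrix mod $p$ throughout $D(\bar x)$. Now write $D(\bar x) = P_{\leq r} \times P_{>r}$ with $P_{\leq r} = \prod_{j=1}^{r}(x_{0,j}+p\Z_p)$ and $P_{>r} = \prod_{j=r+1}^{n}(x_{0,j}+p\Z_p)$, and for fixed $z_{>r} \in P_{>r}$ consider the polynomial map $\psi_{z_{>r}}\colon P_{\leq r} \to \Z_p^{r}$, $z_{\leq r} \mapsto (f_1,\dots,f_r)(z_{\leq r}, z_{>r})$. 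Its Jacobian is invertible mod $p$ at every point of $P_{\leq r}$, so by the $p$-adic inverse function theorem $\psi_{z_{>r}}$ is a measure-preserving bijection of $P_{\leq r}$ onto $\psi_{z_{>r}}(x_0) + p\Z_p^{r} = p\Z_p^{r}$, the last equality because $f_k(x_0) \equiv f_k(\bar x) \equiv 0 \bmod p$. As $l_k \geq 1$ the box $\prod_{k=1}^r p^{l_k}\Z_p$ lies inside $p\Z_p^{r}$, so the set of $z_{\leq r} \in P_{\leq r}$ with $\psi_{z_{>r}}(z_{\leq r}) \in \prod_{k=1}^r p^{l_k}\Z_p$ has measure $p^{-\sum_k l_k}$; integrating in $z_{>r}$ over $P_{>r}$ (of measure $p^{-(n-r)}$) gives \eqref{disk claim}. (The ordering $l_1 \geq \dots \geq l_r$ plays no role; only $l_k \geq 1$ is used.)

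The only ingredient here that is not bookkeeping is the $p$-adic inverse function theorem in the precise form used above: a polynomial self-map of $\Z_p^{r}$ whose Jacobian is a unit on a disk restricts to an isometry of that disk onto a disk of the same radius, hence is measure preserving. This is routine from the integral $p$-adic Taylor expansion of an element of $\Z[X]$, which yields $v_p\big(\psi(w') - \psi(w)\big) = v_p(w' - w)$ for $w, w'$ in the disk; I expect this, together with the identification of which residue disks contribute, to be the main (though still mechanical) point. Equivalently, the same computation may be organised as an explicit induction on the scale $\sum_k l_k$: the base case $l_1 = \dots = l_r = 1$ is simply the count of non-zero mod-$p$ solutions, while the inductive step reduces to the statement that the hyperplanes $\{\langle \nabla f_k(x_0),\,\cdot\,\rangle = c_k\}$ cut out by the linearisations are in general position — precisely the unit-minor property furnished by Definition \ref{non-degenerate}.
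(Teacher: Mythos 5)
Your proof is correct, but it takes a genuinely different route from the paper. The paper proves \eqref{main estimate 1} by induction on $l_1$: at each step it Taylor-expands $f_k(\vec{u}+p^{l_1-1}\vec{z}\,)$ modulo $p^{l_k}$, which linearises the congruences with largest exponent, and then invokes a separate lemma (the unit determinant \eqref{transverse planes}, i.e.\ that the gradient hyperplanes are in general position) to peel off one scale at a time; the ordering $l_1 \geq \dots \geq l_r$ is used to organise which exponents decrease at each stage. You instead fix a nonzero mod-$p$ residue disk once and for all and compute its contribution in a single stroke via the multivariate Hensel/inverse function theorem applied to $(f_1,\dots,f_r)$ in a block of coordinates where the Jacobian minor is a unit, with Fubini in the remaining $n-r$ variables. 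Both arguments rest on exactly the same input — the rank-$r$ Jacobian condition of Definition \ref{non-degenerate} at nonzero mod-$p$ solutions — and your identification of the $p^{n}\mathbf{\mathcal N}(f_1,\dots,f_r;p)-1$ contributing disks, together with the measure computation $p^{-n}\cdot p^{-\sum_k l_k + r}$ per disk, reproduces the right-hand side correctly. What your version buys is directness and the observation that the hypothesis $l_1 \geq \dots \geq l_r$ is superfluous (only $l_k\geq 1$ matters, and indeed both sides of \eqref{main estimate 1} are symmetric in the pairs $(f_k,l_k)$); what the paper's version buys is that it only ever changes variables by \emph{linear} maps over $\Z_p$, avoiding any appeal to Hensel lifting, and it fits the induction-on-scale narrative used throughout. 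The one step you should spell out is the surjectivity of $\psi_{z_{>r}}$ onto the full disk $p\Z_p^{r}$: the isometry estimate $v_p(\psi(w')-\psi(w))=v_p(w'-w)$ gives injectivity and containment of the image, but surjectivity (hence measure preservation onto the whole target disk) requires the Newton iteration of the multivariate Hensel lemma, which is standard but not a formal consequence of the isometry alone.
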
 

Temporarily assuming this lemma and letting $1 \leq r \leq m$ with $r \in R(\vec{f}; \alpha)$, one obtains the identity
\begin{equation*}
I_r = \big(\mathbf{\mathcal N}(f_1, \dots, f_r;p) - p^{-n} \big) \cdot p^{-r\alpha+r} \sum_{\alpha_{r+1} \leq l < \alpha_{r}} p^{-l(n-\sigma_{r})}.
\end{equation*}
A simple computation shows that
\begin{equation*}
\sum_{\alpha_{r+1} \leq l < \alpha_{r}} p^{-l(n-\sigma_{r})} \sim c_{n}(\alpha,r) \cdot \left\{\begin{array}{ll} 
p^{-(n-\sigma_{r})\alpha_{r+1}} & \textrm{if $\sigma_r \leq n$} \\
p^{-(n-\sigma_{r})(\alpha_{r} - 1)}  & \textrm{if $\sigma_r \geq n$}
\end{array}\right. 
\end{equation*}
where $c_{n}(\alpha,r) = \alpha_r - \alpha_{r+1}$ if $\sigma_r = n$ and $1$ otherwise. Substituting these estimates into the formula for $I_r$, it follows that 
\begin{equation}\label{N estimate 3}
 I_r \sim c_{n}(\alpha,r) \cdot \big(\mathbf{\mathcal N}(f_1, \dots, f_r; p) - p^{-n}\big) \cdot p^{-e(\vec{f}; \alpha, r) + r }
\end{equation}
holds for the exponent $e(\vec{f}; \alpha, r)$ from the introduction. Combining \eqref{N estimate 1}, \eqref{N estimate 2} and \eqref{N estimate 3}, one obtains the desired formula.
\end{proof}

%%%%%%%%%%%%%%%%%%%%%%%%%%%%%%%%%%%%%%%%%%%%%%%%%%%%%%%%%%%%%%%%%%%%%%%%%%%%%%%%%%%%%%%%%%%%%%%%

%                                    PROOF OF INDUCTION ON SCALE LEMMA

%%%%%%%%%%%%%%%%%%%%%%%%%%%%%%%%%%%%%%%%%%%%%%%%%%%%%%%%%%%%%%%%%%%%%%%%%%%%%%%%%%%%%%%%%%%%%%%%

\begin{proof}[Proof (of Lemma \ref{main estimate lemma})] The proof proceeds by inducting on $l_1$. If $l_1 = 1$, then $l_1 = \dots = l_r = 1$ and the left-hand side of \eqref{main estimate 1} can be written as 
\begin{equation*}
p^{-n}\#\big\{ \vec{z} \in \mathbb{F}_p^{n} \setminus \{\vec{0}\} : f_k(\vec{z}\,) \equiv 0 \bmod p \textrm{ for $1\leq k \leq r$}\big\}= \mathbf{\mathcal N}(f_1,\dots,f_r; p)-p^{-n},
\end{equation*}
as required. 

Now suppose $l_1 \geq 2$ and that the estimate is valid for all smaller values of $l_1$. The left-hand side of \eqref{main estimate 1} can be expressed as
\begin{equation}\label{main estimate 2}
\sum_{\vec{u} \in [\Z/p^{l_1-1}\Z]^{n} : |\vec{u}| = 1} p^{-(l_1-1)n}\mu\bigg(\bigcap_{k=1}^{r} S(f_k(\vec{u} + p^{l_1-1}\,\cdot\,), p^{-l_k})\bigg).
\end{equation}
For $1 \leq k \leq r$ one has
\begin{equation}\label{Taylor identity}
f_k(\vec{u} + p^{l_1-1}\vec{z}\,) \equiv f_k(\vec{u}\,) + p^{l_1-1} \nabla f_k(\vec{u}\,) \cdot \vec{z} \mod p^{l_k},
\end{equation}
which follows from Taylor's theorem and the fact that $2(l_1 - 1) \geq l_1 \geq l_k$. Let $1 \leq s \leq r$ be the largest integer such that $l_1 = \dots = l_s$. 
As a consequence of the identity \eqref{Taylor identity}, if $1 \leq k \leq s$ and $|f_k(\vec{u} + p^{l_1-1}\vec{z}\,)| \leq p^{-l_k}$ for some $\vec{z} \in \Z_p^{n-1}$, then $|f_k(\vec{u}\,)| \leq p^{-(l_k - 1)}$. On the other hand, if $s+1 \leq k \leq r$, then $f_k(\vec{u} + p^{l_1-1}\vec{z}\,) \equiv f_k(\vec{u}\,) \mod p^{l_k}$ for all  $\vec{z} \in \Z_p^{n-1}$. Define
\begin{equation*}
\tilde{l}_k := \left\{ \begin{array}{ll}
l_k - 1 & \textrm{for $1 \leq k \leq s$} \\
l_k & \textrm{for $s+1 \leq k \leq r$}
\end{array}\right. ,
\end{equation*}
noting that $\tilde{l}_k \geq 1$ for $1 \leq k \leq r$. For $\vec{u} \in \Z_p^{n}$ with $|f_k(\vec{u}\,)| \leq p^{-\tilde{l}_1}$ let $L_{k, \vec{u}}$ denote the degree 1 polynomial
\begin{equation*}
L_{k, \vec{u}}(\vec{z}\,) := p^{-\tilde{l}_1}f_k(\vec{u}\,) +  \nabla f_k(\vec{u}\,) \cdot \vec{z}.
\end{equation*}
Combining the above observations, \eqref{main estimate 2} can be written as 
\begin{equation}\label{main estimate 3}
\sum_{\substack{\vec{u} \in [\Z/p^{\tilde{l}_1}\Z]^{n} : |\vec{u}| = 1 \\ |f_k(\vec{u}\,)| \leq p^{-\tilde{l}_k} \textrm{for $1 \leq k \leq r$} }} p^{-\tilde{l}_1n} \mu\bigg(\bigcap_{k=1}^{s} S(L_{k, \vec{u}}, p^{-1})\bigg).
\end{equation}
Thus the problem is reduced to estimating the size of intersections of neighbourhoods of certain hyperplanes in $\Z_p^{n}$. 

\begin{lemma}\label{intersecting planes lemma} If $\vec{u} \in \mathbb{A}_1^{n}$ satisfies $|f_k(\vec{u}\,)| \leq p^{-1}$ for  $1 \leq k \leq r$, then
\begin{equation}\label{intersecting planes lemma 1}
\mu\bigg(\bigcap_{k=1}^{s} S(L_{k, \vec{u}}, p^{-1})\bigg)  = p^{-s}
\end{equation}
for $1 \leq s \leq r$.
 \end{lemma}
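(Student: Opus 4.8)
The plan is to reduce the estimate on $\mu\big(\bigcap_{k=1}^s S(L_{k,\vec u}, p^{-1})\big)$ to a linear algebra computation over $\mathbb{F}_p$. First I would observe that each $S(L_{k,\vec u}, p^{-1})$ is defined by the congruence $L_{k,\vec u}(\vec z) \equiv 0 \bmod p$, which only depends on $\vec z \bmod p$ and on $\nabla f_k(\vec u) \bmod p$ together with the constant $p^{-\tilde l_1} f_k(\vec u) \bmod p$. Hence
\begin{equation*}
\mu\bigg(\bigcap_{k=1}^s S(L_{k,\vec u}, p^{-1})\bigg) = p^{-n}\,\#\big\{\vec z \in \mathbb{F}_p^n : \overline{\nabla f_k(\vec u)}\cdot \vec z = -\overline{p^{-\tilde l_1}f_k(\vec u)} \textrm{ for } 1\le k\le s\big\},
\end{equation*}
where the bars denote reduction modulo $p$. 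This is an inhomogeneous linear system in $\vec z$ over $\mathbb{F}_p$; provided the $s$ covectors $\overline{\nabla f_k(\vec u)}$, $1\le k\le s$, are linearly independent, the solution set is an affine subspace of dimension $n-s$ (in particular non-empty), so the count is exactly $p^{n-s}$ and the right-hand side equals $p^{-s}$, as claimed.

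The crux is therefore to verify that $\overline{\nabla f_1(\vec u)}, \dots, \overline{\nabla f_s(\vec u)}$ are linearly independent in $(\mathbb{F}_p^n)^\ast$. Here is where I would invoke the non-degeneracy hypothesis from Definition \ref{non-degenerate}. By assumption $\vec u \in \mathbb{A}_1^n$ and $|f_k(\vec u)| \le p^{-1}$ for $1\le k\le r$, so the reduction $\bar{\vec u} \in \mathbb{F}_p^n$ is nonzero and satisfies $f_k(\bar{\vec u}) = 0$ in $\mathbb{F}_p$ for $1\le k\le r$. Passing to the point $[\bar{\vec u}] \in \mathbb{P}^{n-1}(\mathbb{F}_p) \subseteq \mathbb{P}^{n-1}(\overline{\mathbb{F}}_p)$, which lies on $V_s$ (indeed on $V_r$), the non-degeneracy condition gives
\begin{equation*}
\mathrm{rank}\,\frac{\partial(f_1,\dots,f_s)}{\partial(X_1,\dots,X_n)}(\bar{\vec u}) = s,
\end{equation*}
i.e.\ the gradients $\nabla f_1(\bar{\vec u}), \dots, \nabla f_s(\bar{\vec u})$ are linearly independent over $\mathbb{F}_p$ (equivalently over $\overline{\mathbb{F}}_p$). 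Since $s \le r$, this applies here and supplies exactly the independence needed above. One small point to check is that $\nabla f_k(\bar{\vec u})$ — the reduction mod $p$ of the integer gradient evaluated at $\vec u$ — agrees with evaluating the mod-$p$ reduction of $f_k$ at $\bar{\vec u}$; this is immediate since reduction is a ring homomorphism commuting with formal partial differentiation.

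I expect the main obstacle to be purely bookkeeping rather than conceptual: one must be careful that the constants $-\overline{p^{-\tilde l_1} f_k(\vec u)}$ are genuinely well-defined elements of $\mathbb{F}_p$ (which they are, since $|f_k(\vec u)| \le p^{-\tilde l_1}$ ensures $p^{-\tilde l_1} f_k(\vec u) \in \Z_p$), and that the hyperplanes in question are affine rather than linear — but affineness is harmless once independence of the linear parts is known, as a consistent inhomogeneous system with an independent coefficient matrix always has the expected solution count. The only genuine input is the non-degeneracy hypothesis, and the argument is essentially the rank-nullity theorem over $\mathbb{F}_p$ combined with the fact that an $s\times n$ matrix of rank $s$ over a field defines a surjective linear map, so every inhomogeneous system with that coefficient matrix is solvable with fibre of size $p^{n-s}$.
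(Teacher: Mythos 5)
Your proposal is correct and uses the same essential input as the paper: the non-degeneracy hypothesis applied at the reduction $[\bar{\vec{u}}]\in\mathbb{P}^{n-1}(\mathbb{F}_p)$ gives linear independence of $\nabla f_1(\vec{u}),\dots,\nabla f_s(\vec{u})$ modulo $p$, after which the measure is $p^{-s}$. The paper phrases the final computation as a $p$-adic change of variables (completing the gradients to a unimodular matrix via \eqref{transverse planes}) rather than your point count of an affine subspace over $\mathbb{F}_p$, but these are equivalent formulations of the same linear-algebra step.
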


Applying this identity to \eqref{main estimate 3} one observes that \eqref{main estimate 2} can be written as
\begin{equation*}
p^{-s-\tilde{l}_1n}\#\big\{\vec{u} \in [\Z/p^{\tilde{l}_1}\Z]^n : |\vec{u}\,| = 1 \textrm{ and } f_k(\vec{u}\,) \equiv 0 \bmod p^{\tilde{l}_k} \textrm{ for $1 \leq k \leq r$} \big\}, 
\end{equation*}
which can then be expressed as the $p$-adic integral
\begin{equation*}
p^{-s} \mu|_{\mathbb{A}_1^{n}}\bigg(\bigcap_{k=1}^{r} S(f_k, p^{-\tilde{l}_k})\bigg).
\end{equation*}
The induction hypothesis, coupled with the identity
\begin{equation*}
s+ \sum_{k=1}^{r} \tilde{l}_k = \sum_{k=1}^{r} l_k,
\end{equation*}
now implies that
\begin{equation*}
p^{-s} \mu|_{\mathbb{A}_1^{n}}\bigg(\bigcap_{k=1}^{r} S(f_k, p^{-\tilde{l}_k})\bigg) 
= \big(\mathbf{\mathcal N}(f_1, \dots, f_r; p)-p^{-n}\big)\cdot p^{-\sum_{k=1}^{r} l_k + r}
\end{equation*}
Combining the preceding chain of identities closes the induction and concludes the proof of Lemma \ref{main estimate lemma}. 
\end{proof}

%%%%%%%%%%%%%%%%%%%%%%%%%%%%%%%%%%%%%%%%%%%%%%%%%%%%%%%%%%%%%%%%%%%%%%%%%%%%%%%%%%%%%%%%%%%%%%%%

%                                    PROOF OF TRANSVERSALITY LEMMA

%%%%%%%%%%%%%%%%%%%%%%%%%%%%%%%%%%%%%%%%%%%%%%%%%%%%%%%%%%%%%%%%%%%%%%%%%%%%%%%%%%%%%%%%%%%%%%%%

It remains to prove Lemma \ref{intersecting planes lemma}.

\begin{proof}[Proof (of Lemma \ref{intersecting planes lemma})] Given $\vec{u} \in \Z_p^n$ satisfying the hypotheses of the lemma, one wishes to study the intersection properties of the sets
\begin{equation*}
 S(L_{\vec{u}, k}, p^{-1}) = \big\{\vec{z} \in \Z_p^{n} : |p^{-l_1+1}f_k(\vec{u}) +  \nabla f_k(\vec{u}) \cdot \vec{z}\, | \leq p^{-1}\big\}.
\end{equation*}
The non-degeneracy hypothesis implies that $\nabla f_k(\vec{u}\,) \not\equiv 0 \bmod p$ for $1 \leq k \leq r$, and so each $ S(L_{\vec{u}, k}, p^{-1})$ is a $p^{-1}$-neighbourhood of a hyperplane in $\Z_p^{n}$. The size of the intersection of the $S(L_{\vec{u}, k}, p^{-1})$ for $1 \leq k \leq s$ is therefore governed by angles between the normal vectors $\nabla f_1(\vec{u}\,), \dots, \nabla f_{s}(\vec{u}\,)$. More precisely, the non-degeneracy hypothesis implies the existence of some $\mathbf{j} = (j_1, \dots, j_{n-s}) \in \{1, \dots, n\}^{n-s}$ such that
\begin{equation}\label{transverse planes}
 |\det\begin{pmatrix}
       \nabla f_1(\vec{u}\,) & \dots & \nabla f_{s}(\vec{u}\,) & e_{j_1} & \dots & e_{j_{n-s}}
      \end{pmatrix}| = 1,
\end{equation}
where the $e_j$ are the standard basis vectors. Expressing the left-hand side of \eqref{intersecting planes lemma 1} as
\begin{equation*}
\int_{\Z_p^{n}} \prod_{k=1}^{s} \chi_{B(0,p^{-1})}(p^{-l_1+1}f_k(\vec{u}) +  \nabla f_k(\vec{u}\,) \cdot \vec{z} )\,\ud \mu (\vec{z}\,),
\end{equation*}
it follows from the $p$-adic change of variables formula (see, for instance, \cite[$\S$7.5]{Igusa2000}), that 
\begin{equation*}
 \mu\bigg(\bigcap_{k=1}^{s} S(L_{k, \vec{u}}, p^{-1})\bigg) = \int_{\Z_p^s} \prod_{k=1}^{s} \chi_{B(0,p^{-1})}(y_j)\,\ud \mu (\vec{y}\,) = p^{-s},
\end{equation*}
as required. 
\end{proof}

%%%%%%%%%%%%%%%%%%%%%%%%%%%%%%%%%%%%%%%%%%%%%%%%%%%%%%%%%%%%%%%%%%%%%%%%%%%%%%%%%%%%%%%%%%%%%%%%

%                                     APPLICATION TO FACTORISING MONONOMIALS

%%%%%%%%%%%%%%%%%%%%%%%%%%%%%%%%%%%%%%%%%%%%%%%%%%%%%%%%%%%%%%%%%%%%%%%%%%%%%%%%%%%%%%%%%%%%%%%%

\section{Applying Theorem \ref{general theorem} to count factorisations of monomials}\label{application section}

In order to apply Theorem \ref{general theorem} to the problem of factorising monomials, one must verify that the system of power sums satisfies the non-degeneracy hypothesis. 

\begin{lemma} If $P_k \in \Z[X_1, \dots, X_n]$ denotes the $k$-th power sum, then $\vec{P} := (P_1, \dots, P_n)$ is non-degenerate over $\mathbb{F}_p$ for all primes $p > n$. 
\end{lemma}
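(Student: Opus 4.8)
The plan is to verify Definition~\ref{non-degenerate} directly for $\vec{P} = (P_1, \dots, P_n)$: for each $1 \leq r \leq n$, I must show that whenever $x = [x_1 : \dots : x_n] \in \mathbb{P}^{n-1}(\overline{\mathbb{F}}_p)$ satisfies $P_1(x) = \dots = P_r(x) = 0$, the Jacobian matrix $\partial(P_1, \dots, P_r)/\partial(X_1, \dots, X_n)$ has rank exactly $r$ at $x$. Since $\partial P_k / \partial X_j = k X_j^{k-1}$, this Jacobian is, up to the invertible diagonal scaling $\mathrm{diag}(1, 2, \dots, r)$ (invertible precisely because $p > n \geq r$), the $r \times n$ generalised Vandermonde-type matrix whose $(k,j)$ entry is $X_j^{k-1}$, i.e. the matrix with rows indexed by exponents $0, 1, \dots, r-1$ and columns indexed by $x_1, \dots, x_n$. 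So the whole problem reduces to showing that this rectangular Vandermonde matrix has full row rank $r$ at any common zero of $P_1, \dots, P_r$.

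The key step is the following dichotomy. An $r \times n$ Vandermonde matrix with nodes $x_1, \dots, x_n$ fails to have rank $r$ if and only if the number of \emph{distinct} nonzero values among $x_1, \dots, x_n$, plus possibly one for a zero node, is strictly less than $r$; more precisely, if $x$ has at most $r-1$ distinct coordinate values (counting $0$ as one of them when it occurs), then every $r \times r$ minor vanishes. I would argue the contrapositive: suppose the rank is $\leq r-1$, and let $t$ be the number of distinct nonzero values $a_1, \dots, a_t$ occurring among the $x_j$, with multiplicities $m_1, \dots, m_t$ (and set aside the zero coordinates, which contribute nothing to any $P_k$ for $k \geq 1$). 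Rank $\leq r - 1$ forces $t \leq r - 1$. Then the equations $P_1(x) = \dots = P_r(x) = 0$ read $\sum_{i=1}^t m_i a_i^k = 0$ for $k = 1, \dots, r$; restricting to $k = 1, \dots, t$ gives a square Vandermonde system in the $a_i$ with the $m_i$ as unknowns, and since the $a_i$ are distinct and nonzero this Vandermonde is invertible over $\overline{\mathbb{F}}_p$, forcing $m_1 = \dots = m_t = 0$ in $\overline{\mathbb{F}}_p$. Because $p > n$ and each $m_i$ is a positive integer at most $n$, each $m_i$ is a nonzero element of $\mathbb{F}_p$ — a contradiction. Hence the rank is exactly $r$, as the rank of an $r \times n$ matrix cannot exceed $r$.

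I expect the main obstacle — really the only subtle point — to be the bookkeeping around zero coordinates and the careful use of $p > n$: one must observe that zero coordinates of $x$ drop out of $P_k$ for $k \ge 1$ but still occupy columns of the Jacobian, so the rank statement should be phrased in terms of the nonzero coordinates only, and that the multiplicities $m_i \le n < p$ are exactly what prevents the characteristic-$p$ pathology where a repeated node could ``cancel''. One should also note the edge case $x = \vec 0$ is excluded since $x$ lies in projective space, so at least one coordinate is nonzero and $t \geq 1$; and the case $r = n$ is consistent with Lemma~\ref{trivial solution lemma}. Everything else is the standard invertibility of square Vandermonde determinants with distinct nonzero nodes, together with the diagonal rescaling by $\mathrm{diag}(1, \dots, r)$ which is legitimate precisely because $p > n$.
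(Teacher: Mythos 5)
Your argument is correct, and it diverges from the paper's proof at the decisive step. Both proofs begin identically: the Jacobian $\partial P_k/\partial X_j = kX_j^{k-1}$ reduces, after the row scaling by $\mathrm{diag}(1,\dots,r)$ (invertible since $p>n\geq r$), to a rectangular Vandermonde matrix, and a rank drop forces the coordinates of $x$ to take at most $r-1$ distinct values. At that point the paper keeps the distinct values (zero included) as the \emph{unknowns} of a square system of weighted power sums $\sum_i a_i X_i^k = 0$ with the multiplicities $a_i$ as coefficients, and kills this system via Proposition \ref{weighted system proposition}, whose proof rests on a weighted Newton--Girard identity (Lemma \ref{weighted Newton Girard lemma}) and an induction on the number of variables that requires \emph{all} nonempty subset sums $\sum_{i\in S} a_i$ to be nonzero. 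You instead discard the zero coordinates (which contribute nothing to $P_k$ for $k\geq 1$), transpose the roles, and treat the multiplicities $m_i$ as the unknowns of a \emph{linear} system whose coefficient matrix $(a_i^k)_{1\leq k,i\leq t}$ is $\bigl(\prod_i a_i\bigr)$ times a Vandermonde matrix in distinct nonzero nodes, hence invertible; the conclusion $m_i=0$ then contradicts $1\leq m_i\leq n<p$. Your route is more elementary and self-contained (no Newton--Girard machinery, no induction), and it even needs less: only that each individual multiplicity is nonzero mod $p$, rather than all subset sums. What the paper's detour buys is a statement of independent interest, valid over any integral domain, which it isolates as Proposition \ref{weighted system proposition}. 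Your handling of the edge cases (the zero node occupying a column but not an equation, $t\geq 1$ because $x\neq\vec{0}$ in projective space, and $t\leq r-1<r$ so the needed equations are available) is exactly right.
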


\begin{proof} Fixing $1 \leq r \leq n$, suppose that $\vec{z} \in \overline{\mathbb{F}}_p^n$ satisfies 
\begin{equation*}
P_k(\vec{z}\,) = 0 \qquad \textrm{for $1 \leq k \leq r$}
\end{equation*}
and that the vectors $\nabla P_1(\vec{z}\,), \dots, \nabla P_r(\vec{z}\,)$ are linearly dependent in $\overline{\mathbb{F}}_p^n$. 
If $\mathbf{k} = (k_1, \dots, k_{r}) \in \N^{r}$ with $1 \leq k_1 < \dots < k_{r} \leq n$, then it follows that
\begin{equation}\label{linearly dependent}
\det \begin{pmatrix}
\nabla^{\mathbf{k}}P_1 (\vec{z}\,) & \dots & \nabla^{\mathbf{k}}P_{r} (\vec{z}\,) \\
\end{pmatrix} = 0,
\end{equation}
where $\nabla^{\mathbf{k}} \colon \Z[X_1, \dots, X_n] \mapsto \Z[X_1, \dots, X_n]^{r}$ is the differential operator
\begin{equation*}
\nabla^{\mathbf{k}}f := (\partial_{x_{k_1}} f, \dots, \partial_{x_{k_{r}}} f).
\end{equation*}
The determinant in \eqref{linearly dependent} is given by
\begin{equation*}
r!\prod_{1 \leq i < j \leq r} (z_{k_i} - z_{k_j})
\end{equation*}
where, by the hypothesis $p > n$, $r! \not\equiv 0 \bmod p$. Combining these observations, it follows that the $z_1, \dots, z_n$ assume at most $r-1$ values in $\overline{\mathbb{F}}_p^n$. In particular, there exists a partition $A_1, \dots, A_t$ of $\{1, \dots, n\}$ into at most $r-1$ non-empty sets and a collection of distinct elements $x_i \in \overline{\mathbb{F}}_p$ such that
\begin{equation*}
z_k = x_i \quad \textrm{whenever $k \in A_i$.}
\end{equation*}
Thus, if $a_i = \#A_i$, then $(x_1, \dots, x_t) \in \overline{\mathbb{F}}_p^t$ is a solution to the square system
\begin{equation}\label{weighted system 0}
\sum_{i = 1}^t a_i X_i^k = 0 \quad \textrm{for $1 \leq k \leq t$.}
\end{equation}
For any non-empty $S \subseteq \{1, \dots, t\}$ it follows that $1 \leq \sum_{i \in S} a_i \leq n$ and so $p \nmid \sum_{i \in S} a_i$. It is shown in Proposition \ref{weighted system proposition} below that, under these hypotheses, only the trivial solution satisfies a system of the form \eqref{weighted system 0}, and one therefore deduces that $z_k = 0$ for $1 \leq k \leq n$. This shows the system $\vec{P} = (P_1,\ldots, P_n)$ is non-degenerate over ${\mathbb F}_p$.
\end{proof}

The above argument relied upon the following proposition.

\begin{proposition}\label{weighted system proposition} Let $R$ be an integral domain, $\vec{a} = (a_1, \dots, a_n) \in R^n$ and define the weighted power sums
\begin{equation*}
 P_{\vec{a},k}(X_1, \dots, X_n) := \sum_{i = 1}^n a_i X_i^k \qquad \textrm{for $1\leq k \leq n$.}
\end{equation*}
If $\sum_{i \in S} a_i \neq 0$ for all non-empty $S \subseteq \{1, \dots, n\}$, then the system 
\begin{equation}\label{weighted system}
P_{\vec{a},k}(X_1, \dots, X_n) = 0 \qquad \textrm{for $1\leq k \leq n$}
\end{equation}
has a unique (trivial) solution in $R^n$. 
\end{proposition}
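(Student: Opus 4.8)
The plan is to prove Proposition \ref{weighted system proposition} by induction on $n$, treating the system \eqref{weighted system} as a linear system in the "power variables" and exploiting the Vandermonde structure. For $n=1$ the single equation reads $a_1 X_1 = 0$ with $a_1 \neq 0$, so $X_1 = 0$ since $R$ is an integral domain. For the inductive step, suppose $(x_1, \dots, x_n) \in R^n$ solves \eqref{weighted system}. The first observation is that one may reduce to the case where the $x_i$ are pairwise distinct: if $x_i = x_j$ for some $i \neq j$, then merging the two terms replaces $a_i X_i^k + a_j X_j^k$ by $(a_i + a_j) X_i^k$, and since $a_i + a_j \neq 0$ (taking $S = \{i,j\}$) one obtains a system of the same shape in $n-1$ variables — after deleting one equation — whose coefficient-sums over subsets are still nonzero (they are sums of the original $a_i$ over subsets of $\{1,\dots,n\}$). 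The inductive hypothesis then forces all the merged variables to vanish, and one feeds this back in; I will need to be slightly careful that after merging we genuinely land in the hypotheses of the proposition for $n-1$, but the subset-sum condition is clearly inherited.

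So assume the $x_i$ are pairwise distinct. View \eqref{weighted system} as the matrix equation $M \vec{b} = \vec 0$ where $M = (x_i^k)_{1 \le k \le n,\, 1 \le i \le n}$ and $\vec b = (a_1 x_1, \dots, a_n x_n)^{\mathsf T}$ — here I factor one copy of $x_i$ out of the $i$th column so that the $(k,i)$ entry becomes $x_i^{k-1}$ after also shifting; more precisely, the matrix with entries $x_i^{k}$ for $1 \le k \le n$ equals $\mathrm{diag}(x_1,\dots,x_n)$ times the Vandermonde matrix $(x_i^{k-1})_{1\le k \le n,\, 1 \le i \le n}$. Its determinant is $\big(\prod_i x_i\big) \prod_{i<j}(x_j - x_i)$. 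If every $x_i \neq 0$, this determinant is a nonzero element of the integral domain $R$, hence $M$ is injective over the fraction field, forcing $a_i x_i = 0$ and thus $a_i = 0$ for all $i$ — contradicting the subset-sum hypothesis with $S = \{i\}$. Therefore at least one $x_i = 0$, say $x_n = 0$; then the $n$th variable contributes nothing and the first $n-1$ equations become a system of the form \eqref{weighted system} in the variables $x_1, \dots, x_{n-1}$ with coefficients $a_1, \dots, a_{n-1}$, again satisfying the subset-sum condition. The inductive hypothesis gives $x_1 = \dots = x_{n-1} = 0$, completing the induction.

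The main obstacle is bookkeeping rather than any deep idea: one must make sure that after the reductions (merging equal variables, or zeroing out a variable) the reduced system really has exactly the number of equations matching the number of remaining variables and still satisfies the subset-sum nonvanishing condition, so that the inductive hypothesis applies cleanly. The clean way to organize this is to state the induction on $n$ so that the hypothesis "$\sum_{i \in S} a_i \neq 0$ for all non-empty $S$" is carried along and verified to be inherited at each reduction step; the Vandermonde determinant computation then does the real work, and the integral domain hypothesis is used exactly to pass from "determinant nonzero" and "product of entries zero-or-not" to "each variable zero".
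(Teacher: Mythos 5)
Your argument is correct, but it takes a genuinely different route from the paper. The paper first establishes a \emph{weighted Newton--Girard identity},
\begin{equation*}
\Big(\sum_{i=1}^n a_i\Big)e_n(X_1,\dots,X_n) = \sum_{k=1}^n (-1)^{k-1}e_{n-k}(X_1,\dots,X_n)P_{\vec{a},k}(X_1,\dots,X_n),
\end{equation*}
proved by a telescoping computation; substituting a solution makes the right-hand side vanish, so $(\sum_i a_i)\,x_1\cdots x_n = 0$, and the integral domain hypothesis forces some $x_i = 0$ directly --- no case analysis on whether the $x_i$ are distinct is needed, and the induction proceeds exactly as in your final step. Your proof replaces this identity with a Vandermonde determinant computation plus a preliminary merging reduction: the merging step is where the general subset-sum hypothesis does its work for you (the paper instead needs the subset sums only to guarantee that the hypothesis propagates to the sub-tuples arising in the induction), while the Vandermonde step uses only the singleton sums $a_i \neq 0$. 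Both are sound; yours is self-contained modulo a standard determinant formula, whereas the paper's is shorter once the Newton--Girard lemma is in hand and produces that identity as a reusable by-product. Two small points of hygiene in your write-up: the displayed matrix equation should read $V\vec{b} = \vec{0}$ with $V$ the Vandermonde matrix $(x_i^{k-1})$ (or equivalently $M\vec{a}=\vec{0}$ with $M=(x_i^k)$), not $M\vec{b}=\vec{0}$ --- you correct this in prose but the initial display is inconsistent; and it is worth noting explicitly that in the ``pairwise distinct'' branch the conclusion $x_1=\dots=x_n=0$ contradicts distinctness for $n\geq 2$, so that branch is in fact vacuous there --- the proof remains valid, but saying so preempts confusion.
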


The proposition is a consequence of the following identity (see \cite[Theorem 4.3]{Kumar2012} for an alternative approach).

\begin{lemma}[Weighted Newton--Girard formula]\label{weighted Newton Girard lemma} If $R$ is commutative ring (with identity) and $\vec{a} = (a_1, \dots, a_n) \in R^n$, then
\begin{equation}\label{weighted Newton Girard}
 \big(\sum_{i=1}^n a_i\big)e_n(X_1, \dots, X_n) = \sum_{k=1}^n (-1)^{k-1}e_{n-k}(X_1, \dots, X_n)P_{\vec{a}, k}(X_1, \dots, X_n),
\end{equation}
where the $e_k \in R[X_1, \dots, X_n]$ are the elementary symmetric polynomials in $n$ variables. 
\end{lemma}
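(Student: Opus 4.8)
The plan is to prove the weighted Newton--Girard identity \eqref{weighted Newton Girard} by reducing it to the classical (unweighted) Newton--Girard formula via a standard "polarisation" trick: treat the weights $a_i$ as formal parameters and work in a larger polynomial ring. The key observation is that the right-hand side of \eqref{weighted Newton Girard} is linear in the tuple $\vec{a}$, as is the left-hand side, so it suffices to verify the identity when $\vec{a}$ runs over a spanning set of $R^n$ — in particular it suffices to treat the case $\vec{a} = e_\ell$, the $\ell$-th standard basis vector, for each fixed $1 \leq \ell \leq n$. (Strictly, to make "linear combination" legitimate over an arbitrary commutative ring $R$, I would first establish the identity over $\Z[a_1,\dots,a_n,X_1,\dots,X_n]$ — the universal case — and then specialise; over the universal ring the $a_i$ are genuine indeterminates and both sides are $\Z$-linear in them with identical coefficients, so matching the $e_\ell$ cases suffices.)

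First I would record the classical Newton--Girard formula in the form I need it: for the elementary symmetric polynomials $e_k$ and power sums $P_k$ in $n$ variables,
\begin{equation*}
e_n(X_1,\dots,X_n) = \sum_{k=1}^n (-1)^{k-1} e_{n-k}(X_1,\dots,X_n)\, P_k(X_1,\dots,X_n),
\end{equation*}
which is exactly \eqref{weighted Newton Girard} with all $a_i = 1$ (so $\sum a_i = n$ — wait, one must be slightly careful: the standard identity reads $n e_n = \sum_{k=1}^n (-1)^{k-1} e_{n-k} P_k$ only in the guise that sums to $n e_n$; the cleaner statement is that for each $m$, $\sum_{k=1}^m (-1)^{k-1} e_{m-k} P_k = m e_m$, and at $m = n$ this gives $n e_n$, consistent with $\sum a_i = n$). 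Next, for the case $\vec{a} = e_\ell$, the right-hand side of \eqref{weighted Newton Girard} becomes $\sum_{k=1}^n (-1)^{k-1} e_{n-k}(X) X_\ell^k$. I would then invoke the classical formula but with the variable $X_\ell$ singled out: writing $e_j(X_1,\dots,X_n) = e_j(X') + X_\ell e_{j-1}(X')$ where $X'$ denotes the remaining $n-1$ variables, a direct manipulation (or an appeal to the generating-function identity $\prod_i (1 - X_i t) = \sum_j (-1)^j e_j t^j$ combined with $-t \frac{d}{dt}\log\prod_i(1-X_i t) = \sum_k P_k t^k$, restricted to picking out the contribution of the single factor $(1 - X_\ell t)$) yields $\sum_{k=1}^n (-1)^{k-1} e_{n-k}(X) X_\ell^k = e_n(X)$, since $\sum_i a_i = 1$ in this case.

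The cleanest route, and the one I would actually write up, is the generating-function argument done once and for all with the weights present. Form the formal power series $F(t) := \prod_{i=1}^n (1 - X_i t) = \sum_{j=0}^n (-1)^j e_j t^j$ in $R[X_1,\dots,X_n][[t]]$, and compute the weighted logarithmic derivative $\sum_{i=1}^n \frac{-a_i X_i t}{1 - X_i t} = \sum_{k \geq 1} \big(\sum_i a_i X_i^k\big) t^k = \sum_{k\geq 1} P_{\vec{a},k}(X) t^k$. Multiplying through by $F(t)$ and comparing the coefficient of $t^n$ on both sides of $F(t) \cdot \sum_{k\geq 1} P_{\vec a,k}(X) t^k = \big(\sum_i -a_i X_i t \prod_{i'\neq i}(1 - X_{i'}t)\big)$, one reads off $\sum_{k=1}^n (-1)^{n-k} e_{n-k}(X) P_{\vec a,k}(X)$ on the left (up to sign bookkeeping which I would fix to match the stated $(-1)^{k-1}$) and $\big(\sum_i a_i\big) e_n(X)$ on the right, using that the degree-$n$ part of $\sum_i a_i X_i \prod_{i' \neq i}(1 - X_{i'} t)$ in $t$ equals $(\sum_i a_i) e_n(X)$ (each term $a_i X_i \cdot (-1)^{n-1} e_{n-1}(X \setminus X_i) = a_i (-1)^{n-1} \prod_{i' } X_{i'}$ after multiplying by $X_i$). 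The main obstacle is purely bookkeeping: getting the signs and the off-by-one shift in the elementary-symmetric indices exactly right so that the final identity matches \eqref{weighted Newton Girard} as stated, and ensuring the manipulation of $\frac{1}{1-X_i t}$ is legitimate in the formal power series ring over an arbitrary commutative ring $R$ (it is, since $1 - X_i t$ is a unit in $R[X][[t]]$). There is no analytic content and no finiteness issue, so once the indices are pinned down the proof is complete.
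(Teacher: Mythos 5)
Your proposal is correct. The paper's proof is the direct telescoping argument: it uses the splitting identity $e_{n-k}(X_1,\dots,X_n) = X_i e_{n-k-1}(X_1,\dots,\hat{X}_i,\dots,X_n) + e_{n-k}(X_1,\dots,\hat{X}_i,\dots,X_n)$, substitutes it into the right-hand side of \eqref{weighted Newton Girard}, and observes that for each fixed $i$ the sum over $k$ telescopes down to $a_i X_i e_{n-1}(X_1,\dots,\hat{X}_i,\dots,X_n) = a_i e_n(X_1,\dots,X_n)$. Your first route (reduce by $R$-linearity in $\vec{a}$ to the standard basis vectors, then telescope in the single distinguished variable) is this same computation with the sum over $i$ pulled out front; the detour through the universal ring $\Z[a_1,\dots,a_n,X_1,\dots,X_n]$ is not even needed, since both sides are visibly $R$-linear in $\vec{a}$ and the standard basis spans $R^n$ as an $R$-module. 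Your preferred write-up, the generating-function argument comparing the coefficient of $t^n$ in $F(t)\sum_i \frac{-a_iX_it}{1-X_it} = \sum_i(-a_iX_it)\prod_{i'\neq i}(1-X_{i'}t)$ with $F(t)=\prod_i(1-X_it)$, is a genuinely different packaging: it trades the explicit telescoping for a single identity in $R[X_1,\dots,X_n][[t]]$, at the cost of the sign and index bookkeeping you flag (note your displayed expansion of the weighted logarithmic derivative is off by a global sign, $\sum_i \frac{-a_iX_it}{1-X_it} = -\sum_{k\geq 1}P_{\vec{a},k}t^k$, though the signs do resolve to the stated $(-1)^{k-1}$ once tracked through). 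What the generating-function route buys is that it simultaneously yields the weighted analogues of all the Newton--Girard identities (the coefficient of $t^m$ for every $m$, not just $m=n$), which the paper does not need; the paper's telescoping is shorter for the single identity actually used and avoids introducing formal power series.
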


\begin{proof}
Observe that
\begin{equation*}
 e_{n-k}(X_1, \dots, X_n) = X_i e_{n-(k+1)}(X_1,\dots \hat{X}_i \dots, X_n) + e_{n-k}(X_1,\dots \hat{X}_i \dots, X_n)
\end{equation*}
for $1 \leq i, k \leq n$, where the notation $\hat{X}_i$ is used to signify the omission of the $X_i$ variable. Here 
$e_{-1}$ is interpreted as the zero polynomial (and $e_0$ is the constant polynomial 1). Using this identity, one may express the right-hand side of \eqref{weighted Newton Girard} as
\begin{equation*}
 \sum_{i=1}^n a_i \sum_{k=1}^n (-1)^{k-1} \big(X_i^{k+1}e_{n-(k+1)}(X_1,\dots \hat{X}_i \dots, X_n) + X_i^k e_{n-k}(X_1,\dots \hat{X}_i \dots, X_n)\big).
\end{equation*}
Each sum in the $k$ index is telescoping and it is easy to see that the above expression reduces to 
\begin{equation*}
  \sum_{i=1}^n a_i X_i e_{n-1}(X_1,\dots \hat{X}_i \dots, X_n) = \big(  \sum_{i=1}^n a_i\big) e_n(X_1, \dots, X_n),
\end{equation*}
as required. 
\end{proof}

The proposition is now immediate. 

\begin{proof}[Proof (of Proposition \ref{weighted system proposition})] The proof is by induction on $n$, the case $n=1$ being vacuous. Suppose that $\vec{x} = (x_1,\dots, x_n) \in R^n$ is a solution to \eqref{weighted system}. By Lemma \ref{weighted Newton Girard lemma} one has
\begin{equation*}
\big(\sum_{i=1}^n a_i\big)x_1\dots x_n = 0 
\end{equation*}
and, since by hypothesis $\sum_{i=1}^n a_i \neq 0$ and $R$ is an integral domain, one deduces that $x_i =0$ for some $1 \leq i \leq n$. Without loss of generality suppose that $x_n=0$. Then $(x_1, \dots, x_{n-1}) \in R^{n-1}$ is a solution to the system
\begin{equation*}
P_{\vec{a}',k}(X_1, \dots, X_{n-1}) = 0  \qquad \textrm{for $1\leq k \leq n-1$}
\end{equation*}
where the coefficient vector $\vec{a}' := (a_1, \dots, a_{n-1})$ automatically satisfies the hypothesis of the proposition. Thus, by the induction hypothesis, $x_i= 0$ for $1 \leq i \leq n-1$, as required.  
\end{proof}

%%%%%%%%%%%%%%%%%%%%%%%%%%%%%%%%%%%%%%%%%%%%%%%%%%%%%%%%%%%%%%%%%%%%%%%%%%%%%%%%%%%%%%%%%%%%%%%%

%                                          LOWER BOUNDS FOR THEOREM \ref{main theorem}

%%%%%%%%%%%%%%%%%%%%%%%%%%%%%%%%%%%%%%%%%%%%%%%%%%%%%%%%%%%%%%%%%%%%%%%%%%%%%%%%%%%%%%%%%%%%%%%%

\section{Lower bounds in Theorem \ref{main theorem}}\label{lower bounds section}

As already noted, the upper bounds in Theorem \ref{main theorem} follow from Theorem \ref{general theorem}. A slightly more refined analysis is needed to complete the proof of the asymptotic formula \eqref{main theorem estimate} in Theorem \ref{main theorem} for all degrees $n \neq 3$. In particular, it is now shown that for all sufficiently large primes $p$ the inequality
\begin{equation}\label{tight lower bound}
\mathbf{N}(\vec{0}_n; p^{\alpha}) \gg_{n} [\alpha]^{\delta_n(\triangle)} p^{-e_n(\alpha)} 
\end{equation}
holds for all $\alpha \in \N$ and $n\not= 3$. As by-product of analysis, the sharp result in the $n=3$ case may also be derived, which curiously has additional dependence on both the parity of $\alpha$ and the congruence class of $p$ modulo $3$. 

Recall the key formula
\begin{equation}\label{key formula reproduced}
\mathbf{N}(\vec{0}_n; p^{\alpha}) \sim_n \sum_{r \in R_n(\alpha)} c_n(\alpha,r) \cdot 
\big(\mathbf{\mathcal N}(P_1, \dots, P_r;p) - p^{-n}\big) \cdot p^{-e_n(\alpha, r) +r}
\end{equation}
established in Proposition \ref{key formula proposition}. First suppose that $n = \triangle_{r'}$ for some $r' \geq 2$ and that $r' \in R_n(\alpha)$ so that
$ [\alpha]^{\delta_n(\triangle)} = \alpha$. In this case, $c_n(\alpha,r') \sim_n \alpha$ and $c_n(\alpha,r) = 1$ for all other values of $r$. If $n \neq 3$, then $r' \leq n-2$ and so the Lang--Weil bound \eqref{Lang--Weil estimate} yields
\begin{equation}\label{tight lower bound 1}
\mathbf{\mathcal N}(P_1, \dots, P_{r};p) \gg_n p^{-r}.
\end{equation}
for $r= r'$. The lower bound \eqref{tight lower bound} now follows by combining \eqref{tight lower bound 1} with \eqref{key formula reproduced}. Thus, provided $n \neq 3$, one may assume without loss of generality that   $[\alpha]^{\delta_n(\triangle)} = 1$ and $c_n(\alpha,r) = 1$ for all $\alpha$ and $r$.

Focusing on the $n \neq 3$ case, it now suffices to show that for $n \neq 3$ the estimate
\begin{equation}\label{tight lower bound 3}
\mathbf{N}(\vec{0}_n; p^{\alpha}) \gg_{n} p^{-e_n(\alpha)} 
\end{equation}
holds for all $\alpha \in \N$. By \eqref{key formula reproduced}, this would follow if one could demonstrate that there exists some $r \in R_n(\alpha)$ for which $e_n(\alpha) = e_n(\alpha, r)$ and \eqref{tight lower bound 1} holds. 

\subsection*{Low degree case} For any value of $n \in \N$ it is immediate that  
\begin{equation}\label{trivial finite field estimates}
\mathbf{\mathcal N}(\emptyset ; p) = 1 \quad \textrm{and} \quad \mathbf{\mathcal N}(P_1 ; p) = p^{-1}.
\end{equation}
From these identities it follows that \eqref{tight lower bound} holds, establishing Theorem \ref{main theorem} for $n=1,2$. 

\subsection*{High degree case} Here \eqref{tight lower bound 3} is established for degrees $n \geq 4$. Recall that the Lang--Weil estimate \eqref{Lang--Weil estimate} implies that \eqref{tight lower bound 1} holds for all $1 \leq r \leq n-2$ and, trivially, \eqref{tight lower bound 1} is also valid for $r = n$. Thus, if either $n-1 \notin R_{n}(\alpha)$ or there exists some $1 \leq r \leq n-2$ with $r \in R_n(\alpha)$ and $e_n(\alpha, n-1) \geq e_n(\alpha, r)$, then the desired lower bound \eqref{tight lower bound 3} immediately follows. 

These observations allow one to easily treat the $n= 4$ case. It is useful to first compute the relevant exponents:
\begin{equation*}
\begin{array}{rclcrcl}
e_4(\alpha, 0) &=& 4\alpha,                                          &\qquad&  e_4(\alpha, 1) &=& \alpha + 3 \lceil \frac{\alpha}{2} \rceil, \\[3pt]
e_4(\alpha, 2) &=& 2\alpha + \lceil \frac{\alpha}{3} \rceil,         &\qquad&  e_4(\alpha, 3) &=& 3 \alpha - 2 \lceil \frac{\alpha}{3} \rceil + 2.
\end{array}
\end{equation*}
Since $e_4(\alpha, 2) \leq e_4(\alpha, 3)$, the lower bound \eqref{tight lower bound 3} holds whenever $2 \in R_4(\alpha)$ or $3 \notin R_4(\alpha)$. This leaves only $\alpha = 4$, but since $e_4(4, 1) = e_4(4, 3) = 10$, the result also holds in this case. 

A similar, but more involved, argument allows one to treat $n \geq 5$. Let $n \geq 5$ and suppose, aiming for a contradiction, that $n-1 \in R_{n}(\alpha)$ and that there exists no value of $r \in R_n(\alpha)$ with $1 \leq r \leq n-2$ and $e_n(\alpha, n-1) \geq e_n(\alpha, r)$. Recall from the introduction that 
\begin{equation*}
e_n(\alpha, r+1) - e_n(\alpha, r) \geq 1 \quad \textrm{for $\triangle_r \geq n$.}
\end{equation*} 
It therefore follows that $r \notin R_n(\alpha)$ for $r_n^+ \leq r \leq n-2$ and so 
\begin{equation*}
k := \Big\lceil \frac{\alpha}{r_n^+} \Big\rceil = \Big\lceil \frac{\alpha}{n-1} \Big\rceil. 
\end{equation*}
In particular, 
\begin{equation}\label{comparing terms 1}
\alpha = (k-1)(n-1) + j = (k-1)r_n^+ + j'
\end{equation}
for some $1 \leq j \leq n-1$ and $1 \leq j' \leq r_n^+$. 

If $k = 1$, then $\lceil \tfrac{\alpha}{n-1} \rceil = 1 = \lceil \tfrac{\alpha}{n}\rceil$, contradicting the assumption that $n-1 \in R_n(\alpha)$. If $j' = r_n^+$, then 
\begin{equation*}
 \frac{\alpha}{r_n^+} = \Big\lceil \frac{\alpha}{r_n^+} \Big\rceil < \Big\lceil \frac{\alpha}{r_n^+-1} \Big\rceil
\end{equation*}
and so $r_n^+ - 1 \in R_n(\alpha)$. Furthermore,
\begin{align*}
e_n(\alpha, r_n^+) - e_n(\alpha, r_n^+ - 1) &= \alpha + (n-\triangle_{r_n^+})(\lceil \tfrac{\alpha}{r_n^+} \rceil - 1) -  (n-\triangle_{r_n^+-1})\lceil \tfrac{\alpha}{r_n^+} \rceil \\
&= \alpha - r_n^+\cdot\lceil \tfrac{\alpha}{r_n^+} \rceil  + (\triangle_{r_n^+}-n) = \triangle_{r_n^+}-n \geq 0,
\end{align*}
which implies that $e_n(\alpha, n-1) \geq e_n(\alpha, r_n^+ - 1)$, a contradiction. 

The above observations show that $k\geq 2$ and $1 \leq j' \leq r_n^+ - 1$ and so \eqref{comparing terms 1} yields
\begin{equation*}
(k-1)(n-1-r_n^+) = j' - j \leq r_n^+ - 2.
\end{equation*}
Write $n =  \triangle_{r_n^+-1} + s_n$ for some $1 \leq s_n \leq r_n^+$ so that
\begin{equation*}
(k-1)(r_n^+(r_n^+-3) + 2(s_n-1)) \leq 2r_n^+ - 4.
\end{equation*}

If $k\geq 3$ and  $s_n \geq 2$, then $r_n^+$ must satisfy 
\begin{equation*}
(x-2)^2 = x^2-4x + 4 \leq 0
\end{equation*}
which forces $r_n^+ = 2$ and hence $n=3$, but this contradicts the assumption $n\geq 5$. Furthermore, if $k\geq 3$ and $s_n =1$, then $r_n^+$ must satisfy $x^2 - 4x +2 \leq 0$. This implies that $r_n^+ \in \{2, 3\}$ and so $3 \leq n \leq 6$; of these values of $n$, only $n=4$ satisfies $s_n = 1$, again contradicting the assumption that $n\geq 5$.
%Since $s_n \geq 1$ (and the assumption $n \geq 5$ implies that $r_n^+ \geq 3$), if $k \geq 3$, then $r_n^+$ must satisfy
%\begin{equation*}
%(x-2)^2 = x^2-4x + 4 \leq 0
%\end{equation*}
%which forces $r_n^+ \leq 2$, a contradiction. 

From the preceding analysis one deduces that $k = 2$ and that $n$ must satisfy the inequality
\begin{equation}\label{comparing terms 2}
t_n := (r_n^+)^2 -5r_n^+ +2(s_n+1)\leq 0.
\end{equation}
Again using the basic estimate $s_n \geq 1$, it follows that $r_n^+ \leq 4$ and so $5 \leq n \leq 10$. Furthermore, an explicit computation (see Figure \ref{lots of values}) now shows that $5$ and $7$ are the only values of $n$ for which \eqref{comparing terms 2} holds. For both $n=5$ and $n=7$ the inequality \eqref{comparing terms 2} is saturated. Consequently, $j$ must assume the extreme value $j = 1$ and so $\alpha = n$. Finally, by direct computation one may show that $2 \in R_5(5)$, $3 \in R_7(7)$ and 
\begin{equation*}
e_5(5, 2) = 14 \leq e_5(5,4) = 15 \quad e_7(7, 3) = 23 \leq e_7(7,6) = 28,
\end{equation*}
which is the desired contradiction.

\begin{figure}
\begin{center}
\begin{tabular}{ |c|c|c|c| } 
 \hline
$n$ & $r_n^+$ & $s_n$ & $t_n$   \\[6pt]
\hline
5   &    3    &   2   &        0                       \\
6   &    3    &   3   &        2                       \\
7   &    4    &   1   &        0                       \\
8   &    4    &   2   &        2                       \\
9   &    4    &   3   &        4                       \\
10  &    4    &   4   &        6                       \\
 \hline 
\end{tabular}
\caption{If $n \geq 5$, then $t_n \leq 0$ holds only for $n=5$ and $n=7$.}
\label{lots of values}
\end{center}
\end{figure}

\subsection*{Intermediate degree ($n=3$) case} It remains to examine the situation when $n=3$, which is a little more complicated. 

Given $\vec{a} = (a_1, \dots, a_r) \in \Z$ define $f_{\vec{a}}(x) := \sum_{j=1}^r a_j x^j$. For any $n \in \N$ the basic properties of character sums imply that
\begin{align}
\nonumber
\mathbf{\mathcal N}(P_1, \dots, P_r; p) &= p^{-n-r} \sum_{a_1  \; (p)} \dots \sum_{a_r  \; (p)}  \Big(\sum_{x \; (p)} e^{2 \pi i f_{\vec{a}}(x)/p}\Big)^n \\
\label{exponential sum}
&= p^{-r} + p^{-r}\sum_{\vec{a}  \; (p) \,:\, \vec{a} \not\equiv \vec{0} \bmod p} \Big(p^{-1}\sum_{x \; (p)}  e^{2 \pi i f_{\vec{a}}(x)/p}\Big)^n, 
\end{align}
where the sums in $x$ and the $a_j$ in the first line are each over a complete set of residues modulo $p$. If $r = 2$ and $p$ is odd, then the above expression involves classical Gauss sums which can be evaluated using the formula
\begin{equation}\label{Gauss formula}
p^{-1}\sum_{x\; (p)} e^{2\pi i (ax^2 + bx)/p} = \varepsilon_p(a| p)p^{-1/2}e^{-2\pi i (4a)^{-1}b^2/p} \qquad \textrm{for $a \not\equiv 0 \bmod p$}.
\end{equation}
Here $\varepsilon_p = 1$ whenever $p \equiv 1 \bmod 4$ and $\varepsilon_p = i$ otherwise, and $(a| p)$ is the Legendre symbol. Indeed, by completing the square in the phase, \eqref{Gauss formula} is a direct consequence of Gauss' classical formula for quadratic Gauss sums (see, for instance, \cite[$\S$9.10]{Apostol1976}). Writing $\mathbf{\mathcal N}(P_1, P_2; p) = p^{-2} + E$, it follows from the above identity that
\begin{equation*}
E = \varepsilon_p^n p^{-(n+4)/2} \sum_{a \;(p) \,:\, a \not\equiv 0 \bmod p}(a| p)^n\sum_{b \; (p)}e^{-2\pi i n (4a)^{-1}b^2/p}.
\end{equation*}
The sum in $b$ can also be evaluated and, applying elementary properties of quadratic residues (in particular, the completely multiplicative property of the Legendre symbol), one obtains
\begin{equation*}
E = \varepsilon_p^{n+1}(-n|p) p^{-(n+3)/2} \sum_{a\;(p)  : a \not\equiv 0 \bmod p}(a| p)^{n-1}.
\end{equation*}
Recall that there are precisely $(p-1)/2$ non-zero quadratic residues and $(p-1)/2$ quadratic non-residues modulo $p$. Thus,
\begin{equation*}
\sum_{a\;(p)  : a \not\equiv 0 \bmod p}(a| p)^{n-1} = (1 + (-1)^{n-1})\cdot \frac{p-1}{2}
\end{equation*}
and, consequently, 
\begin{equation}\label{Gauss sum identity}
E = \left\{\begin{array}{ll}
0 & \textrm{if $n$ is even} \\
\varepsilon_p^{n+1}(-n|p) p^{-(n+3)/2}(p-1) & \textrm{if $n$ is odd}
\end{array}\right. . 
\end{equation}

The above formula can be used to treat the $n=3$ case, which behaves in a distinctly different manner from that of every other degree. Here the relevant exponents are given by
\begin{equation*}
e_3(\alpha, 0) = 3\alpha, \qquad
e_3(\alpha, 1) = \alpha + 2\lceil \frac{\alpha}{2} \rceil, \qquad
e_3(\alpha, 2) = 2\alpha.
\end{equation*}
If $2 \notin R_3(\alpha)$ (that is, $\alpha \in \{1,2,4\}$), then the bound
\begin{equation}\label{tight lower bound 4}
\mathbf{N}(\vec{0}_3; p^{\alpha}) \gg [\alpha]^{\delta_3(\triangle)}p^{-e_3(\alpha)}
\end{equation}
follows immediately from the trivial identities stated in \eqref{trivial finite field estimates} (note that, in this case, $[\alpha]^{\delta_n(\triangle)} = 1$). If $2 \in R_3(\alpha)$, then the analysis is more complex. The identity \eqref{Gauss sum identity} implies that\footnote{Here it is assumed that $p > 3$ so that $(-3|p) \neq 0$.}
\begin{equation*}
\mathbf{\mathcal N}(P_1, P_2; p) = \left\{ \begin{array}{ll} 
2p^{-2} - p^{-3} & \textrm{if $-3$ is a quadratic residue modulo $p$} \\
p^{-3} & \textrm{otherwise}
\end{array}\right. .
\end{equation*}
By the law of quadratic reciprocity, $-3$ is a quadratic residue modulo $p$ if and only if $p \equiv 1 \bmod 3$. Thus, if $2 \in R_3(\alpha)$ and $p \equiv 1 \bmod 3$, then \eqref{tight lower bound 4} once again holds. Now suppose that $2 \in R_3(\alpha)$ and $p \not\equiv 1 \bmod 3$. If $\alpha$ is even, then $e_3(\alpha ,1) = e_3(\alpha,2) = 2\alpha$ and so 
\begin{equation*}
\mathbf{N}(\vec{0}_3; p^{\alpha}) \sim p^{-e_3(\alpha)},
\end{equation*}
by \eqref{key formula reproduced}, which differs by a logarithm from what one would expect based on the bounds for $n \neq 3$. If $\alpha$ is odd, then $e_3(\alpha, 1) = e_3(\alpha, 2) + 1$ and so
\begin{equation*}
\mathbf{N}(\vec{0}_3; p^{\alpha}) \sim p^{-e_3(\alpha) -1}
\end{equation*}
again by \eqref{key formula reproduced}, which differs by a factor of $p^{-1}$ (up to a logarithmic factor) from what one would expect based on the bounds for $n \neq 3$. 
%It is clear from the analysis that all these bounds are sharp (in the sense that the corresponding upper bounds also hold). 
The situation for $n=3$ is therefore summarised as follows.

\begin{lemma}\label{cubic lemma} If $p$ is a sufficiently large prime, then 
\begin{equation*}
\mathbf{N}(\vec{0}_3; p^{\alpha}) \sim [\alpha]^{\delta_3(\triangle)\cdot \kappa(p)} p^{-e_{3}(\alpha) - \lambda(\alpha)\cdot(1-\kappa(p))}
\end{equation*}
holds for all $\alpha \in \N$ where
\begin{equation*}
\lambda(\alpha) := \left\{\begin{array}{ll}
1 & \textrm{if $\alpha > 1$ is odd} \\
0 & \textrm{otherwise}
\end{array}\right.\quad \textrm{and}\quad 
\kappa(p) :=  \left\{\begin{array}{ll}
1 & \textrm{if $p \equiv 1 \mod 3$} \\
  0    & \textrm{otherwise}
\end{array}\right. .
\end{equation*}
\end{lemma}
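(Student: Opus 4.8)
The plan is to derive Lemma \ref{cubic lemma} directly from the key formula \eqref{key formula reproduced} together with the exact value of $\mathbf{\mathcal N}(P_1, P_2; p)$ obtained via the Gauss sum computation summarised in \eqref{Gauss sum identity}. First I would recall that for $n = 3$ the relevant exponents are $e_3(\alpha,0) = 3\alpha$, $e_3(\alpha,1) = \alpha + 2\lceil \alpha/2\rceil$ and $e_3(\alpha,2) = 2\alpha$, and that $R_3(\alpha) \subseteq \{0,1,2\}$ with $2 \in R_3(\alpha)$ precisely when $\alpha \notin \{1,2,4\}$. Since $n = 3 = \triangle_2$ is triangular and $r'_3 = r_3^- = r_3^+ = 2$, the coefficient $c_3(\alpha, 2) = \alpha_2 - \alpha_3 = \lceil \alpha/2\rceil \sim \alpha$ whenever $2 \in R_3(\alpha)$, while $c_3(\alpha,r) = 1$ for $r \in \{0,1\}$; this is the source of the logarithmic factor $[\alpha]^{\delta_3(\triangle)}$.

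The main computational input is the exact evaluation $\mathbf{\mathcal N}(P_1, P_2; p) = 2p^{-2} - p^{-3}$ when $-3$ is a quadratic residue modulo $p$ and $\mathbf{\mathcal N}(P_1, P_2; p) = p^{-3}$ otherwise; by quadratic reciprocity this dichotomy is exactly $p \equiv 1 \bmod 3$ versus $p \not\equiv 1 \bmod 3$ (for $p > 3$). The decisive point is that in the second case the ``$r = 2$'' term of \eqref{key formula reproduced}, whose size is governed by $\mathbf{\mathcal N}(P_1, P_2; p) - p^{-3}$, \emph{vanishes entirely}, so the sum is dominated by the $r = 1$ term instead. I would therefore split the proof into the cases already flagged in the text: (i) $2 \notin R_3(\alpha)$, i.e. $\alpha \in \{1,2,4\}$, where $[\alpha]^{\delta_3(\triangle)} = 1$, $\lambda(\alpha) = 0$, and the trivial finite-field identities \eqref{trivial finite field estimates} give the claim directly from \eqref{key formula reproduced}; (ii) $2 \in R_3(\alpha)$ and $p \equiv 1 \bmod 3$, where $\kappa(p) = 1$ so the claimed bound is $\alpha p^{-e_3(\alpha)}$, which follows since the $r=2$ term contributes $\sim c_3(\alpha,2)(2p^{-2} - p^{-3} - p^{-3}) p^{-e_3(\alpha,2)+2} = \sim \alpha\, p^{-e_3(\alpha,2)} = \sim \alpha\, p^{-e_3(\alpha)}$, dominating the $r = 0, 1$ terms; (iii) $2 \in R_3(\alpha)$ and $p \not\equiv 1 \bmod 3$, where $\kappa(p) = 0$ so $[\alpha]^{\delta_3(\triangle)\cdot\kappa(p)} = 1$, the $r = 2$ term drops out, and one must compare $e_3(\alpha,1)$ with $e_3(\alpha,2)$.

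For case (iii) the remaining step is a short arithmetic comparison of exponents. When $\alpha$ is even, $\lceil \alpha/2\rceil = \alpha/2$, so $e_3(\alpha,1) = \alpha + \alpha = 2\alpha = e_3(\alpha,2)$, and $\lambda(\alpha) = 0$; thus $\mathbf{N}(\vec{0}_3;p^\alpha) \sim p^{-e_3(\alpha)} = p^{-e_3(\alpha)-\lambda(\alpha)(1-\kappa(p))}$ as claimed (with $e_3(\alpha) = 2\alpha$, achieved by $r = 1$). When $\alpha > 1$ is odd, $\lceil \alpha/2\rceil = (\alpha+1)/2$, so $e_3(\alpha,1) = \alpha + (\alpha+1) = 2\alpha + 1 = e_3(\alpha,2) + 1$; hence the minimum over $R_3(\alpha)$ is still $e_3(\alpha) = 2\alpha$, but that minimum is realised only at $r = 2$ --- whose term has vanished --- so the surviving dominant contribution is the $r = 1$ term of size $\sim p^{-e_3(\alpha,1)} = p^{-e_3(\alpha)-1}$, matching $\lambda(\alpha) = 1$. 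In all three cases one must also check that the discarded terms ($r = 0$, and the appropriate one of $r = 1, 2$) are genuinely of smaller order, which is routine given $e_3(\alpha,0) = 3\alpha$ and the inequalities above. The only subtlety --- and the step I expect to require the most care --- is bookkeeping the factor of $p^{+2}$ in \eqref{key formula reproduced} and the precise constant in $\mathbf{\mathcal N}(P_1,P_2;p) - p^{-3}$ so that the logarithmic factor appears exactly when $n = 3 = \triangle_2$, $p \equiv 1 \bmod 3$, and $2 \in R_3(\alpha)$, and not otherwise; but this is precisely what the definition of $[\alpha]^{\delta_3(\triangle)\cdot\kappa(p)}$ encodes, so the verification is purely mechanical.
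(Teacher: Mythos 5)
Your proposal is correct and follows essentially the same route as the paper: the key formula \eqref{key formula reproduced}, the exact Gauss-sum evaluation of $\mathbf{\mathcal N}(P_1,P_2;p)$ combined with quadratic reciprocity to detect $p \equiv 1 \bmod 3$, and the same three-way case split with the even/odd comparison of $e_3(\alpha,1)$ and $e_3(\alpha,2)$ when the $r=2$ term vanishes. One cosmetic slip: for the full system $(P_1,P_2,P_3)$ one has $\alpha_3 = \lceil \alpha/3 \rceil$, so $c_3(\alpha,2) = \lceil \alpha/2\rceil - \lceil \alpha/3\rceil$ rather than $\lceil \alpha/2\rceil$, but this is still $\sim \alpha$ whenever $2 \in R_3(\alpha)$, so nothing in the argument changes.
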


Thus, Lemma \ref{cubic lemma} and Theorem \ref{main theorem} provide a precise count of the number of factorisations of $X^n$ over $\Z/p^{\alpha}\Z$ for all degrees $n$, provided the prime $p$ is sufficiently large. 

%%%%%%%%%%%%%%%%%%%%%%%%%%%%%%%%%%%%%%%%%%%%%%%%%%%%%%%%%%%%%%%%%%%%%%%%%%%%%%%%%%%%%%%%%%%%%%%%

%                                       APPENDIX: SOME ALGEBRAIC GEOMETRY

%%%%%%%%%%%%%%%%%%%%%%%%%%%%%%%%%%%%%%%%%%%%%%%%%%%%%%%%%%%%%%%%%%%%%%%%%%%%%%%%%%%%%%%%%%%%%%%%

\appendix

\section{Irreducibility of projective varieties defined by non-degenerate systems}\label{algebraic geometry section}

Recall that the Lang--Weil theorem \cite{Lang1954} was used to derive Proposition \ref{classical estimates proposition}, which formed the base case of the induction argument used to prove Theorem \ref{general theorem}. To justify the application of the Lang--Weil bound, one must verify that certain projective varieties defined by the homogenous polynomials $f_k$ are \emph{absolutely irreducible}: that is, they are irreducible as varieties over $\mathbb{P}^{n-1}(\overline{\mathbb{F}}_p)$. 

\begin{lemma}\label{irreducible lemma} Suppose $1 \leq m \leq n-2$ and $\vec{f} := (f_1, \dots, f_m)$ is a system of homogeneous polynomials that satisfies the non-degeneracy hypothesis. The projective variety 
\begin{equation}\label{irreducible lemma variety}
V := \{x \in \mathbb{P}^{n-1}(\overline{\mathbb{F}}_p) : f_j(x) = 0 \textrm{ for $1 \leq j \leq m$ }\}
\end{equation}
is irreducible. 
\end{lemma}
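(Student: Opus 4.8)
The plan is to deduce irreducibility from two softer properties: that $V$ is a \emph{smooth} projective variety and that it is \emph{connected}. This suffices because a smooth scheme over a field is regular, hence normal, and a normal Noetherian scheme has its connected components equal to its irreducible components; thus a smooth connected variety is automatically irreducible (and reduced). Since $\overline{\mathbb{F}}_p$ is algebraically closed, this yields precisely the absolute irreducibility required for the Lang--Weil estimate.

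The first step is to pass to the affine cone $C(V) := \{x \in \mathbb{A}^n(\overline{\mathbb{F}}_p) : f_j(x) = 0 \textrm{ for } 1 \leq j \leq m\}$. Since $C(V)$ is cut out by $m$ equations and contains the origin, Krull's height theorem shows that every irreducible component of $C(V)$ through $\vec{0}$ has dimension at least $n - m \geq 2$; in particular $C(V)$ contains points other than $\vec{0}$, so $V \neq \emptyset$. For any $x \in C(V) \setminus \{\vec{0}\}$ the associated point $[x] \in \mathbb{P}^{n-1}(\overline{\mathbb{F}}_p)$ lies on $V$, so the non-degeneracy hypothesis applied with $r = m$ guarantees that the Jacobian $\frac{\partial(f_1, \dots, f_m)}{\partial(X_1, \dots, X_n)}(x)$ has rank exactly $m$. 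By the Jacobian criterion, $C(V)$ is therefore smooth at $x$ of dimension $n - m$; hence $C(V) \setminus \{\vec{0}\}$ is smooth of pure dimension $n - m$, and as $V$ is the quotient of $C(V) \setminus \{\vec{0}\}$ by the free scaling action of $\overline{\mathbb{F}}_p^{\times}$, descent of smoothness along this $\mathbb{G}_m$-torsor shows $V$ is smooth of pure dimension $n - 1 - m$. In particular $V$ has codimension $m$ in $\mathbb{P}^{n-1}$ and is defined by exactly $m$ equations, so it is a complete intersection, and $\dim V = n - 1 - m \geq 1$ by the hypothesis $m \leq n - 2$. (This argument, applied at level $r$ in place of $m$, also records the dimension count $\dim V_r = n - 1 - r$ needed for Lemma~\ref{Schwarz--Zippel lemma}.)

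It then remains to verify connectedness, and here I would invoke the classical fact that a complete intersection of positive dimension in projective space over a field is connected; this can be seen by noting that $C(V)$ is a complete intersection (hence Cohen--Macaulay) of dimension $\geq 2$ which is connected, so it is connected in codimension one, whence deleting the vertex leaves $C(V) \setminus \{\vec{0}\}$ connected and therefore so is its image $V$. Alternatively, connectedness follows by induction on the number of equations: writing $V_r := \{x \in \mathbb{P}^{n-1}(\overline{\mathbb{F}}_p) : f_j(x) = 0 \textrm{ for } 1 \leq j \leq r\}$, the set $V_r$ is a hypersurface section of $V_{r-1}$, which by the inductive hypothesis is irreducible of dimension $n - r \geq 2$ (using $r \leq m \leq n-2$); the classical connectedness theorem for hyperplane and hypersurface sections of irreducible projective varieties of dimension $\geq 2$ then shows $V_r$ is connected, and combining this with the smoothness established above gives that $V_r$ is irreducible, closing the induction. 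In either case $V$ is smooth and connected, hence irreducible.

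The step I expect to require the most care is pinning down that $\dim V = n - 1 - m$ \emph{exactly}: Krull's theorem only supplies the lower bound, and the matching upper bound is precisely what the Jacobian rank condition provides — equivalently, one must rule out $V$ having excess dimension and confirm that it is a genuine complete intersection. Once this is secured, the hypothesis $m \leq n - 2$ is exactly what forces $\dim V \geq 1$, so that the other non-trivial input — the connectedness of positive-dimensional complete intersections, imported from algebraic geometry — becomes applicable.
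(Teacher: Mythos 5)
Your proof is correct, and its engine is the same as the paper's: both deduce irreducibility of $V$ from smoothness (supplied by the non-degeneracy hypothesis through the Jacobian criterion) together with connectedness (supplied by a connectedness theorem for complete intersections). Where you genuinely differ is in the supporting commutative algebra. The paper runs an induction on $r$ in which the \emph{ideals} $I_r = \langle f_1,\dots,f_r\rangle$ are shown to be prime: primality of $I_{r-1}$ produces a strict chain of primes and hence $\mathrm{height}(I_r)=r$ (the generalised Krull principal ideal theorem giving the matching upper bound), radicality of $I_r$ is verified via Serre's criterion, and the equidimensionality needed for the Jacobian criterion is extracted from Cohen--Macaulayness of the local rings of $K[V]$. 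You bypass all of this by working on the affine cone: Krull's height theorem gives every component dimension at least $n-m$, the Jacobian rank condition at every non-vertex point gives regularity and hence dimension at most $n-m$, and the two together yield smoothness and pure dimension $n-1-m$ for $V$ in one stroke --- which also delivers Corollary \ref{dimension corollary} and the dimension hypothesis of Lemma \ref{Schwarz--Zippel lemma}. Your connectedness argument (the Cohen--Macaulay cone is connected in codimension one, so deleting the vertex, which has codimension $n-m\geq 2$, preserves connectedness) is the cone-level form of the Hartshorne connectedness theorem that the paper invokes for $V$ directly; your alternative induction via hypersurface sections also works, since each $V_{r-1}$ is smooth, hence normal, of dimension $\geq 2$. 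The only thing your leaner route does not produce is the paper's stronger ideal-theoretic conclusion that each $I_r$ is radical and prime; since neither that conclusion nor anything beyond irreducibility and the dimension count is used elsewhere in the paper, nothing is lost for the application.
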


As a consequence of the proof of Lemma \ref{irreducible lemma}, one can also verify the dimension condition needed for the application of the Schwarz--Zippel bound in Lemma \ref{Schwarz--Zippel lemma}.

\begin{corollary}\label{dimension corollary} Suppose $1 \leq m \leq n-1$ and $\vec{f} := (f_1, \dots, f_m)$ is a system of homogeneous polynomials that satisfies the non-degeneracy hypothesis. If $V$ is as in \eqref{irreducible lemma variety}, then $\dim V = n - 1 -r$.  
\end{corollary}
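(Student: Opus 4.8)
The plan is to read off the dimension directly from the Jacobian criterion together with Krull's height theorem, the point being that the non-degeneracy hypothesis is precisely the assertion that the projective scheme cut out by $f_1, \dots, f_m$ is a smooth complete intersection of the expected codimension. This fact is already required in the proof of Lemma~\ref{irreducible lemma}, so the proof of the corollary simply isolates the relevant intermediate step. (Here the integer $r$ in the displayed statement should be read as $m$, since $V$ is cut out by the $m$ polynomials $f_1, \dots, f_m$.)

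First I would pass to the affine cone $C \subseteq \overline{\mathbb{F}}_p^{\,n}$ defined by the vanishing of $f_1, \dots, f_m$ in affine space; since the $f_j$ are homogeneous, $C$ is a cone with vertex the origin and $\dim V = \dim C - 1$ once one knows $C \neq \{0\}$. For the lower bound I would invoke the projective dimension theorem: because $m \le n-1$, the system $f_1 = \dots = f_m = 0$ has a solution in $\mathbb{P}^{n-1}(\overline{\mathbb{F}}_p)$, so $V$ (hence $C$) is nonempty, and Krull's height theorem applied to the $m$ generators shows that every irreducible component of $C$ has dimension at least $n - m \ge 1$. In particular no component of $C$ is the vertex alone, so each component of $C$ meets $\overline{\mathbb{F}}_p^{\,n} \setminus \{0\}$ and descends to a component of $V$ of dimension at least $n-1-m$.

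For the upper bound I would fix an irreducible component $Z$ of $V$, choose a point $x \in Z$ lying on no other component, and lift it to a point $\tilde x \in C \setminus \{0\}$. Then $\tilde x$ represents a point of $\mathbb{P}^{n-1}(\overline{\mathbb{F}}_p)$ at which $f_1 = \dots = f_m = 0$, so the non-degeneracy hypothesis forces
\begin{equation*}
\mathrm{rank}\, \frac{\partial(f_1, \dots, f_m)}{\partial(X_1, \dots, X_n)}(\tilde x) = m.
\end{equation*}
Hence the Zariski tangent space to $C$ at $\tilde x$ has dimension $n - m$, so $\dim_{\tilde x} C \le n - m$ and therefore $\dim Z \le n - 1 - m$. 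Combining this with the lower bound shows that every component of $V$ has dimension exactly $n-1-m$, whence $\dim V = n-1-m$.

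The step requiring most care is the bookkeeping around the vertex of the cone: I must make sure that a generic point of each component of $V$ lifts to a genuinely nonzero point of affine space, so that the pointwise non-degeneracy hypothesis — stated only for points of $\mathbb{P}^{n-1}(\overline{\mathbb{F}}_p)$ — can legitimately be applied, and I must correctly translate the affine tangent-space computation back to the dimension of the projective variety. A minor additional subtlety is that the non-degeneracy condition is phrased in terms of $\overline{\mathbb{F}}_p$-points rather than scheme-theoretically, but since closed points are dense in any variety over the algebraically closed field $\overline{\mathbb{F}}_p$, this introduces no genuine difficulty.
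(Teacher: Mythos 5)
Your argument is correct (and you are right that the $r$ in the statement should be read as $m$), but it reaches the conclusion by a genuinely different route from the paper's. The paper extracts the corollary from the induction in the proof of Lemma \ref{irreducible lemma}: the strictly increasing chain of \emph{prime} ideals $I_0 \subset \dots \subset I_{r-1} \subset I_r$ — whose primality is the entire content of the irreducibility argument (Serre's criterion for radicality, smoothness plus Hartshorne connectedness for irreducibility) — gives $\mathrm{height}(I_r) \geq r$, the generalised Krull principal ideal theorem gives $\mathrm{height}(I_r) \leq r$, and the dimension formula \eqref{dimension formula} converts $\mathrm{height}(I_r) = r$ into $\dim V_r = n-1-r$; the only extra work in the corollary is observing that the first half of the inductive step still runs when $r = m = n-1$, outside the range of the lemma. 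You keep the Krull half (your lower bound: every component of the affine cone has dimension at least $n-m \geq 1$, hence every component of $V$ has dimension at least $n-1-m$), but you replace the prime-chain half with a pointwise tangent-space estimate: at a nonzero point $\tilde x$ of the cone lying over a generic point of a component $Z$, non-degeneracy forces the Jacobian to have rank $m$, so $\dim_{\tilde x} C \leq \dim T_{\tilde x} C \leq n-m$ and $\dim Z \leq n-1-m$. This upper bound is self-contained — it uses neither primality nor radicality of the $I_i$ — so your proof establishes the dimension count (which is all that the Schwarz--Zippel bound in Lemma \ref{Schwarz--Zippel lemma} actually needs) independently of the irreducibility machinery; what the paper's version buys is economy, since Lemma \ref{irreducible lemma} must be proved anyway for the Lang--Weil step and the dimension then comes for free. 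Your handling of the cone vertex and of the nonemptiness of $V$ is sound.
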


Before stating the proof of Lemma \ref{irreducible lemma} and Corollary \ref{dimension corollary}, it is useful to review some of the basic concepts from commutative algebra and algebraic geometry which appear in the argument. All the facts and definitions presented below are standard and can be found in many textbooks (see, for instance, \cite{Kunz2013}). 

Let $K$ be an algebraically closed field and $R$ be a commutative, Noetherian ring (for instance, $R = K[X_1, \dots, X_n]$).
\begin{itemize}
\item A projective variety $V \subseteq \mathbb{P}^{n-1}(K)$ is the zero-locus of a set $f_1, \dots, f_r \in K[X_1, \dots, X_n]$ of homogeneous polynomials (note that here a variety is \emph{not} required to be irreducible). If, in particular, $V = \{x \in \mathbb{P}^{n-1}(K) : f(x) = 0 \}$ is the zero-locus of a single non-constant homogeneous polynomial $f \in K[X_1, \dots, X_n]$, then $V$ is said to be a projective hypersurface.  
\item The ideal $\mathcal{I}(V)$ of a variety $V \subseteq \mathbb{P}^{n-1}(K)$ is the collection of all polynomials in $K[X_1, \dots, X_n]$ which vanish on $V$. Fixing homogeneous polynomials $f_1, \dots, f_r \in K[X_1, \dots, X_n]$ and defining
\begin{equation*}
V := \{ x \in \mathbb{P}^{n-1}(K) : f_j(x) = 0 \textrm{ for all $1 \leq j \leq r$}\},
\end{equation*}
if $I := \langle f_1, \dots, f_r \rangle$ denotes the homogeneous ideal generated by the $f_1, \dots, f_r$, then Hilbert's Nullstellensatz states that
\begin{equation*}
\mathcal{I}(V) = \sqrt{I}.
\end{equation*}
Here, for any ideal $I \unlhd R$, the radical ideal $\sqrt{I}$ is defined by
\begin{equation*}
\sqrt{I} := \{f \in R : f^m \in I \textrm{ for some $m \in \N$ } \}
\end{equation*}
\item A projective variety $V \subseteq \mathbb{P}^{n-1}(K)$ is irreducible if the following holds: if $V = V_1 \cup V_2$ for $V_1, V_2 \subseteq \mathbb{P}^{n-1}$ projective varieties, then $V_1 = V$ or $V_2 = V$. This condition is equivalent to the primality of the ideal $\mathcal{I}(V)$.

\item A chain of prime ideals of the form $\mathfrak{p}_0 \subset \mathfrak{p}_1 \subset \dots \subset \mathfrak{p}_k$ is said to have length $k$ (here each $\mathfrak{p}_j \unlhd R$ is a prime ideal and the inclusions are strict). The Krull dimension of a ring $R$, which is denoted by $\dim R$, is the supremum of all lengths of chains of prime ideals in $R$. As a key example, $\dim K[X_1, \dots, X_n] = n$; in fact the length of \emph{any} maximal chain of prime ideals in $K[X_1, \dots, X_n]$ is $n$ (see, for instance, \cite[Chapter II, Proposition 3.4]{Kunz2013}).

\item Given a prime ideal $\mathfrak{p} \unlhd R$ define the height of $\mathfrak{p}$ to be the supremum of all lengths of prime ideals of $R$ contained in $\mathfrak{p}$. The height of an arbitrary (that is, not necessarily prime) proper ideal $I\unlhd R$, which is denoted $\mathrm{height}(I)$, is then defined to be the infimum of the heights of all prime ideals which contain $I$. Since any prime ideal containing $I$ automatically contains $\sqrt{I}$, it follows that $\mathrm{height}(\sqrt{I}) = \mathrm{height}(I)$. The generalised Krull principal ideal theorem (see, for instance, \cite[Chapter V, Theorem 3.4]{Kunz2013}) asserts that if $I = \langle f_1 , \dots, f_r \rangle$ is generated by $r$ elements, then $\mathrm{height}(I) \leq r$. 
\item The dimension $\dim V$ of a projective variety $V$ is given by $\dim V := \dim K[V] - 1$ where $K[V]$ is the \emph{co-ordinate ring}
\begin{equation*}
 K[V] := K[X_1, \dots, X_n]/\mathcal{I}(V)
\end{equation*}
(see, for instance, \cite[Chapter II, Proposition 4.4]{Kunz2013}). As a consequence of the correspondence theorem for prime ideals, it follows that $\dim V \leq \dim K[X_1, \dots, X_n] - 1  - \mathrm{height}(\mathcal{I}(V))$. In fact, since the length of any maximal chain of prime ideals in $K[X_1,\dots, X_n]$ is $n$ (see, for instance, \cite[Chapter II Proposition 3.4]{Kunz2013}), it is not difficult to see that equality holds; that is,
\begin{equation}\label{dimension formula}
\dim V = n - 1 - \mathrm{height} (\mathcal{I}(V)).
\end{equation}
\item A projective variety $V \subseteq \mathbb{P}^n(K)$ is a set-theoretic complete intersection if it is the intersection of $n - \dim V$ projective hypersurfaces. 
\end{itemize} 

\begin{proof}[Proof (of Lemma \ref{irreducible lemma})] For $0 \leq r \leq m$ let $I_r := \langle f_1, \dots, f_r \rangle$, where it is understood that $I_0 := \{0\}$. It will be shown, using induction, that $I_0 \subset  \dots  \subset I_m$ (with strict inclusion) and that each $I_r$ is a prime ideal. Letting $V_0 := \mathbb{P}^{n-1}(\overline{\mathbb{F}}_p)$ and
\begin{equation*}
V_r := \{x \in \mathbb{P}^{n-1}(\overline{\mathbb{F}}_p) : f_j(x) = 0 \textrm{ for $1 \leq j \leq r$ }\}
\end{equation*}
 for $1 \leq r \leq m$, it then immediately follows that the varieties $V_r$ are all irreducible. It is remarked that it is useful to establish the stronger condition that the $I_r$ are prime in order to facilitate the induction.

The case $r = 0$ (corresponding to the trivial ideal $\{0\}$) is vacuous. Fix $1 \leq r \leq m$ and assume, by way of induction hypothesis, that $I_0 \subset  \dots  \subset I_{r-1}$ and that each $I_i$ is prime for $0 \leq i \leq r-1$. 

To show $I_{r-1} \subset I_{r}$ is a \emph{proper} subset, it suffices to show that $f_r \notin \langle f_1, \dots, f_{r-1} \rangle$. Aiming for a contradiction, suppose that
\begin{equation*}
f_r = \sum_{j=1}^{r-1} h_j f_j
\end{equation*}
 for some $h_j \in \overline{\mathbb{F}}_p[X_1, \dots, X_n]$. Differentiating the above equation,
\begin{equation*}
\frac{\partial f_r}{\partial X_k} = \sum_{j=1}^{r-1} \frac{\partial h_j}{\partial X_k}f_j + h_j \frac{\partial f_j}{\partial X_k}
\end{equation*}
and thus, if $x \in Z(f_1, \dots, f_r)$, then it follows that
\begin{equation*}
\frac{\partial f_r}{\partial X_k}(x) = \sum_{j=1}^{r-1} h_j(x) \frac{\partial f_j}{\partial X_k}(x).
\end{equation*}
However, this identity contradicts the non-degeneracy hypothesis (which implies that the vectors $\big(\frac{\partial f_j}{\partial \vec{X}}(x)\big)_{j=1}^r$ are linearly independent) and so $V_r \subset V_{r-1}$, as claimed.

To prove that $I_r$ is prime it suffices to show:
\begin{enumerate}[i)]
\item $I_r$ is radical;
\item $V_r$ is irreducible.\footnote{Of course, the irreducibility of $V_r$ is the only property that one is really after here, but the stronger condition that $I_r$ is prime is needed to run the induction argument.}
\end{enumerate}
The first step towards proving either of these statements is to show that $\mathrm{height}(I_r) = r$. Recall from the induction hypothesis that $\{0\} = I_0 \subset \dots \subset I_{r-1}$ forms a strictly increasing chain of prime ideals with $I_{r-1} \subset I_r$, which implies that $\mathrm{height}(I_r) \geq r$. Therefore, combining this with the generalised Krull principal ideal theorem, $\mathrm{height}(I_r) = r$, as required.

One may now show that $I_r$ is a radical ideal via a criterion of Serre (see, for instance, \cite[Chapter 18]{Eisenbud1995}). Let $\mathcal{J}_r$ be the ideal generated by the $r \times r$ minors of the Jacobian matrix 
\begin{equation*}
\frac{\partial \vec{f}}{\partial \vec{X}} = \frac{\partial (f_1, \dots, f_r)}{\partial (X_1, \dots, X_n)}
\end{equation*}
taken modulo $I_r$; that is, $\mathcal{J}_r$ is the ideal of the ring $K[X_1, \dots, X_n]/I_r$ generated by the minors of $\partial \vec{f}/\partial \vec{X}$ viewed as elements of  $K[X_1, \dots, X_n]/I_r$. Combining \cite[Proposition 18.13]{Eisenbud1995} and \cite[Proposition 18.15 a)]{Eisenbud1995}, to show $I_r$ is radical it suffices to show that $\mathrm{height}(\mathcal{J}_r) \geq 1$. This is equivalent to showing $\mathrm{height} (I_r+J_r)/I_r \geq 1$ where $J_r$ is the ideal of $K[X_1, \dots, X_n]$ generated by the minors of $\partial \vec{f}/\partial \vec{X}$. The non-degeneracy condition implies that
\begin{equation*}
\mathcal{V}(I_r) \cap \mathcal{V}(J_r) = \big\{ x \in \mathbb{P}^{n-1}(\overline{\mathbb{F}}_p) : f(x) = 0 \textrm{ for all $f \in I_r + J_r$}\big\} = \emptyset 
\end{equation*}
and thus, by the Nullstellensatz, $\sqrt{I_r + J_r} = \langle X_1, \dots, X_n \rangle$ and hence $\mathrm{height}(I_r + J_r) = n$. Since $\langle X_1, \dots, X_n \rangle$ is a maximal ideal of $K[X_1, \dots, X_n]$, it follows from the discussion preceding \eqref{dimension formula} that any maximal chain of prime ideals containing $I_r + J_r$ has length $n$. From this, it follows that
\begin{equation*}
\mathrm{height} (I_r+J_r)/I_r = n -  r \geq 2,
\end{equation*}
and so $I_r$ is radical.

It remains to demonstrate the irreducibility of $V := V_r$; for this it suffices to show the following two conditions hold:
\begin{enumerate}[i)]
\item  $V$ is (Zariski) connected;
\item  $V$ is smooth as a projective variety.
\end{enumerate}
Indeed, any regular point of $V$ lies in precisely 1 irreducible component (see, for instance, \cite[Chapter VI, Proposition 1.13]{Kunz2013}). Thus, if $V$ is smooth, then the irreducible components partition $V$ into disjoint Zariski-closed subsets. If $V$ is also connected, then $V$ must have a single irreducible component, and so $V$ is irreducible. 

By the Nullstellensatz, $\mathrm{height}(\mathcal{I}(V)) = \mathrm{height}(I_r) = r$ and so, recalling \eqref{dimension formula}, it follows that $\dim V = n-1-r\geq 1$. Since $V$ is, by definition, the intersection $r$ of projective hypersurfaces, $V$ is therefore a set-theoretic complete intersection. The Hartshorne connectedness theorem \cite[Chapter VI, Theorem 4.2]{Kunz2013} now implies that $V$ is connected.

Finally, one may verify that $V$ is smooth using the the Jacobian criterion (see, for instance, \cite[Chapter VI, Proposition 1.5]{Kunz2013}\footnote{The reference \cite{Kunz2013} only gives the \emph{affine} Jacobian criterion, rather than the \emph{projective} version used here. Both results, however, can be obtained via a similar method of proof.}). This states that for every point $x \in V$ one has
\begin{equation}\label{Jacobian criterion}
\mathrm{rank} \frac{\partial (f_1, \dots, f_r)}{\partial (X_1, \dots, X_n)} (x) \leq n - 1 - \dim_x V,
\end{equation} 
where $\dim_x V$ is equal to the maximum of the dimensions of the irreducible components of $V$ that contain $x$ and, moreover, if equality holds in \eqref{Jacobian criterion}, then $V$ is smooth at $x$. Note that $n - 1 - \dim V = n - 1 - (n-1-r) = r$, which is precisely the rank of the Jacobian matrix, and so one wishes to show that $\dim_x V = \dim V$. To see this, it suffices to prove for any given $x \in V$ that all the irreducible components of $V$ that contain $x$ have the same dimension. Indeed, in this case, since $V$ is connected, it follows that all the irreducible components of $V$ must have the same dimension, and this must then be equal to $\dim V$ (since $\dim V$ is equal to the maximum of the dimensions of the irreducible components by the Nullstellensatz (see \cite[Chapter II Proposition 3.11]{Kunz2013})). 

Fixing $x \in V$, consider the prime ideal $\mathfrak{p}_x := \{ f \in K[V] : f(x) = 0 \}$ (recall $K[V]$ denotes the co-ordinate ring of $V$). The localisation $K[V]_{\mathfrak{p}_x}$ is Cohen--Macaulay by \cite[Proposition 18.8]{Eisenbud1995} (see also \cite[Chapter VI, Corollary 3.15]{Kunz2013}) and therefore all the minimal primes of $K[V]_{\mathfrak{p}_x}$ have the same dimension $d$ by \cite[Proposition 18.11]{Eisenbud1995}. Let $W$ be an irreducible component of $V$ containing $x$. Thus, by the Nullstellensatz (see \cite[Chapter II Proposition 3.11]{Kunz2013}), $W$ corresponds to a minimal prime ideal $\mathfrak{q} \unlhd  K[V]$ with $\mathfrak{q} \subseteq \mathfrak{p}_x$. The localisation $\mathfrak{q}_{\mathfrak{p}_x}$ is a minimal prime of $K[V]_{\mathfrak{p}_x}$ (see \cite[Chapter III, Proposition 3.6 and Proposition 4.14]{Kunz2013}) and $K[V]_{\mathfrak{p}_x}/\mathfrak{q}_{\mathfrak{p}_x} \cong K[V]/\mathfrak{q}$ (see \cite[Chapter III, Rule 4.15]{Kunz2013}). Combining these observations, $\dim W = \dim K[V]/\mathfrak{q} = d$ and so all irreducible components $W$ containing $x$ have the same dimension, as required.  

\end{proof}

The above argument also yields Corollary \ref{dimension corollary}.

\begin{proof}[Proof (of Corollary \ref{dimension corollary})] The proof of Lemma \ref{irreducible lemma} implies that $\dim V_r = n-1-r$ for $1 \leq r \leq \min\{m,n-2\}$, and it remains only to verify the case $r = m = n-1$. The first part of the argument used to establish the inductive step shows $\mathrm{height}(I_{n-1}) = n-1$ and therefore one deduces that $\dim V_{n-1} = 0$, as required. 
\end{proof}

%%%%%%%%%%%%%%%%%%%%%%%%%%%%%%%%%%%%%%%%%%%%%%%%%%%%%%%%%%%%%%%%%%%%%%%%%%%%%%%%%%%%%%%%%%%%%%%%

%                                          REFERENCES

%%%%%%%%%%%%%%%%%%%%%%%%%%%%%%%%%%%%%%%%%%%%%%%%%%%%%%%%%%%%%%%%%%%%%%%%%%%%%%%%%%%%%%%%%%%%%%%%

\bibliography{Reference}

\providecommand{\bysame}{\leavevmode\hbox to3em{\hrulefill}\thinspace}
\providecommand{\MR}{\relax\ifhmode\unskip\space\fi MR }
% \MRhref is called by the amsart/book/proc definition of \MR.
\providecommand{\MRhref}[2]{%
  \href{http://www.ams.org/mathscinet-getitem?mr=#1}{#2}
}
\providecommand{\href}[2]{#2}
\begin{thebibliography}{10}

\bibitem{Apostol1976}
Tom~M. Apostol, \emph{Introduction to analytic number theory}, Springer-Verlag,
  New York-Heidelberg, 1976, Undergraduate Texts in Mathematics. \MR{0434929}

\bibitem{Bombieri1974}
Enrico Bombieri, \emph{Counting points on curves over finite fields (d'apr\`es
  {S}. {A}. {S}tepanov)}, S\'eminaire {B}ourbaki, 25\`eme ann\'ee (1972/1973),
  {E}xp. {N}o. 430 (1974), 234--241. Lecture Notes in Math., Vol. 383.
  \MR{0429903}

\bibitem{Cluckers2008}
Raf Cluckers, \emph{Igusa and {D}enef-{S}perber conjectures on nondegenerate
  {$p$}-adic exponential sums}, Duke Math. J. \textbf{141} (2008), no.~1,
  205--216. \MR{2372152}

\bibitem{Cluckers2010}
\bysame, \emph{Exponential sums: questions by {D}enef, {S}perber, and {I}gusa},
  Trans. Amer. Math. Soc. \textbf{362} (2010), no.~7, 3745--3756. \MR{2601607}

\bibitem{Deligne1974}
Pierre Deligne, \emph{La conjecture de {W}eil. {I}}, Inst. Hautes \'Etudes Sci.
  Publ. Math. (1974), no.~43, 273--307. \MR{0340258}

\bibitem{Denef2001}
J.~Denef and S.~Sperber, \emph{Exponential sums mod {$p^n$} and {N}ewton
  polyhedra}, Bull. Belg. Math. Soc. Simon Stevin (2001), no.~suppl., 55--63, A
  tribute to Maurice Boffa. \MR{1900398}

\bibitem{Denef1991}
Jan Denef, \emph{Report on {I}gusa's local zeta function}, Ast\'erisque (1991),
  no.~201-203, Exp.\ No.\ 741, 359--386 (1992), S\'eminaire Bourbaki, Vol.
  1990/91. \MR{1157848}

\bibitem{Eisenbud1995}
David Eisenbud, \emph{Commutative algebra}, Graduate Texts in Mathematics, vol.
  150, Springer-Verlag, New York, 1995, With a view toward algebraic geometry.
  \MR{1322960}

\bibitem{Hickman3}
Jonathan Hickman and James Wright, \emph{{A}n abstract {$L^2$} {F}ourier
  restriction theorem}, submitted.

\bibitem{Hickman}
\bysame, \emph{{T}he {F}ourier restriction and {K}akeya problems over rings of
  integers modulo {$N$}}, in preparation.

\bibitem{Igusa1978}
Jun-ichi Igusa, \emph{Forms of higher degree}, Tata Institute of Fundamental
  Research Lectures on Mathematics and Physics, vol.~59, Tata Institute of
  Fundamental Research, Bombay; by the Narosa Publishing House, New Delhi,
  1978. \MR{546292}

\bibitem{Igusa2000}
\bysame, \emph{An introduction to the theory of local zeta functions}, AMS/IP
  Studies in Advanced Mathematics, vol.~14, American Mathematical Society,
  Providence, RI; International Press, Cambridge, MA, 2000. \MR{1743467
  (2001j:11112)}

\bibitem{Kumar2012}
Neeraj Kumar and Ivan Martino, \emph{Regular sequences of power sums and
  complete symmetric polynomials}, Matematiche (Catania) \textbf{67} (2012),
  no.~1, 103--117. \MR{2927823}

\bibitem{Kunz2013}
Ernst Kunz, \emph{Introduction to commutative algebra and algebraic geometry},
  Modern Birkh\"auser Classics, Birkh\"auser/Springer, New York, 2013,
  Translated from the 1980 German original [MR0562105] by Michael Ackerman,
  With a preface by David Mumford, Reprint of the 1985 edition [MR0789602].
  \MR{2977456}

\bibitem{Lachaud2015}
Gilles Lachaud and Robert Rolland, \emph{On the number of points of algebraic
  sets over finite fields}, J. Pure Appl. Algebra \textbf{219} (2015), no.~11,
  5117--5136. \MR{3351576}

\bibitem{Lang1954}
Serge Lang and Andr\'e Weil, \emph{Number of points of varieties in finite
  fields}, Amer. J. Math. \textbf{76} (1954), 819--827. \MR{0065218}

\bibitem{Macdonald1995}
I.~G. Macdonald, \emph{Symmetric functions and {H}all polynomials}, second ed.,
  Oxford Mathematical Monographs, The Clarendon Press, Oxford University Press,
  New York, 1995, With contributions by A. Zelevinsky, Oxford Science
  Publications. \MR{1354144}

\bibitem{Stepanov1969}
S.~A. Stepanov, \emph{The number of points of a hyperelliptic curve over a
  finite prime field}, Izv. Akad. Nauk SSSR Ser. Mat. \textbf{33} (1969),
  1171--1181. \MR{0252400}

\bibitem{Wooley1996}
Trevor~D. Wooley, \emph{A note on simultaneous congruences}, J. Number Theory
  \textbf{58} (1996), no.~2, 288--297. \MR{1393617 (97h:11037)}

\bibitem{Wright}
James Wright, \emph{{E}xponential sums and polynomial congruences in two
  variables: the quasi-homogeneous case}, Preprint: {\verb+arXiv:1202.2686+}.

\end{thebibliography}
\bibliographystyle{amsplain}

\end{document}